\documentclass[a4paper]{article}
\usepackage{amsmath,amssymb,amsthm,amsfonts}
\usepackage{enumerate,color, graphicx}
\newtheorem{theorem}{Theorem}[section]

\newtheorem{proposition}[theorem]{Proposition}

\newtheorem{conjecture}[theorem]{Conjecture}

\newtheorem{remark}[theorem]{Remark}

\def \rz {{\mathbb R}}

\def \rz {{\mathbb R}}

\newcommand {\ar}{\rightarrow}
\newcommand {\pa}{\partial}

\newcommand{\ep}{\epsilon}

\numberwithin{equation}{section}
\begin{document}
{\centering
\bfseries
{\Large On the multiplicity of the second eigenvalue of the Laplacian in non simply connected domains\\--with some numerics--}
\\
\noindent
\par
\mdseries
\scshape
\small
B. Helffer$^{*,**}$\\
T. Hoffmann-Ostenhof$^{***}$ \\
F. Jauberteau$^{*}$\\
C. L\'ena $^{****}$\\
\par
\upshape
Laboratoire de Math\'ematique Jean Leray, Univ. Nantes $^*$\\
Laboratoire de Math\'ematiques d'Orsay, Univ Paris-Sud and CNRS $^{**}$\\
Institut f\"ur Theoretische Chemie, Universit\"at Wien $^{***}$\\
Department of Mathematics, Stockholm University $^{****}$\\~\\

}

\begin{abstract} We revisit an interesting example proposed by Maria Hoffmann-Ostenhof,  the second author and Nikolai Nadirashvili of a bounded domain in $\mathbb R^2$ for which the second eigenvalue of the Dirichlet Laplacian has multiplicity $3$.
We also analyze  carefully the first eigenvalues of the Laplacian in the case of the disk with two symmetric cracks  placed on a smaller concentric disk in function of their size.
\end{abstract}

\newpage

\section{Introduction}
The motivating problem is to analyze the multiplicity of the $k$-th eigenvalue of the Dirichlet problem in a domain $\Omega$  in $\mathbb R^2$. It is for example an old result of Cheng \cite{Ch}, that the multiplicity of the second eigenvalue is at most $3$. 
In \cite{HOHON2} an example  with multiplicity $3$ is given  as a side product of the production of a counter example to the nodal line conjecture (see also \cite{HOHON1},
  and the papers by Fournais  \cite{Fo} and
Kennedy \cite{K}  who extend to higher dimensions  these counter
examples, introducing new methods). This example is based on the spectral analysis of the Laplacian in domains consisting of a disc in which we have introduced on an interior concentric circle suitable  cracks.\\  We discuss the initial proof and complete it by one missing argument. For completion, we will also extend the validity of a theorem of Cheng to less regular domains.
\begin{figure}[!ht]\label{fig1}
 \begin{center}
\includegraphics[width=10cm]{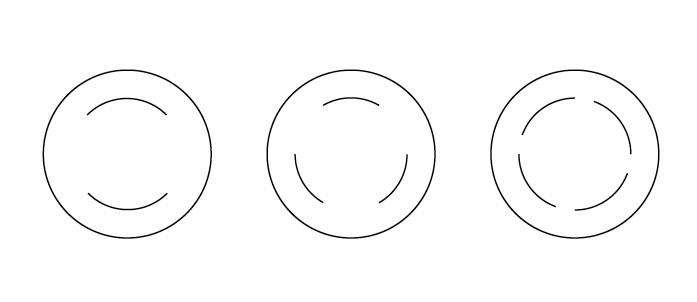}
 \caption{The domains with cracks for $N=2$, $N=3$ and $N=4$.} 
  \end{center}
 \end{figure}
 Although not needed for the positive results, we complete the paper with numerical results illustrating why some argument has to be modified 
   and propose  a fine theoretical analysis of the spectral problem when the cracks are closed.
 \section{Main statement}
 The starting point for the construction of counterexamples  to the nodal line conjecture \cite{HOHON1,HOHON2}  is the introduction of two concentric open discs $B_{R_1}$ and $B_{R_2}$ with $0 < R_1 < R_2$ and  the  corresponding annulus $M_{R_1,R_2} = B_{R_2} \setminus \bar B_{R_1}$.
 The authors choose  $R_1$ and $R_2$ such that
 \begin{equation}\label{ineqa}
 \lambda_1(B_{R_1}) < \lambda_1 ( M_{R_1,R_2} )  <   \lambda_2(B_{R_1})\,,
 \end{equation}
 where, for $\omega\subset \mathbb R^2$ bounded,  $\lambda_j(\omega)$ denotes the $j$-th eigenvalue of the Dirichlet Laplacian $H$ in $\omega$.\\
 We observe indeed that for fixed $R_1$,  $\lambda_1 ( M_{R_1,R_2} )$ tends to $+\infty$ as $R_2 \ar R_1$ (from above) and tends to $0$ as $R_2 \rightarrow +\infty\,$. Moreover $R_2 \mapsto \lambda_1 ( M_{R_1,R_2} )$ is decreasing. Hence there is some interval $(a(R_1),b(R_1))$  with $a(R_1) >R_1$ such that \eqref{ineqa} is satisfied if and only if  $R_2 \in (a(R_1),b(R_1))$.\\
 Then we  introduce $$ D_{R_1,R_2} = B_{R_1} \cup M_{R_1,R_2}$$
 and observe that
 \begin{equation}\label{specannulus}
 \begin{array}{l}
 \lambda_1(D_{R_1,R_2} )=\lambda_1(B_{R_1})\\
  \lambda_2(D_{R_1,R_2} )=  \lambda_1 ( M_{R_1,R_2} )\\
  \lambda_3 (D_{R_1,R_2} )= \min(\lambda_2(B_{R_1}), \lambda_2 ( M_{R_1,R_2} ))\,.
 \end{array}
 \end{equation}
 
 If Condition \eqref{ineqa} was important in the construction of the counter-example to the nodal line conjecture, the weaker assumption 
 \begin{equation}\label{ineqb}
 \max (\lambda_1(B_{R_1}),  \lambda_1 ( M_{R_1,R_2} )) < \min (\lambda_2(B_{R_1}), \lambda_2 ( M_{R_1,R_2} ))\,.
 \end{equation}
 suffices for the multiplicity question. Under this condition, we have:
 \begin{equation}\label{specannulusa}
 \begin{array}{l}
 \lambda_1(D_{R_1,R_2} )=\min (\lambda_1(B_{R_1}),  \lambda_1 ( M_{R_1,R_2} ))\\
  \lambda_2(D_{R_1,R_2} )=  \max (\lambda_1(B_{R_1}),  \lambda_1 ( M_{R_1,R_2} )) \\
  \lambda_3 (D_{R_1,R_2} )= \min (\lambda_2(B_{R_1}), \lambda_2 ( M_{R_1,R_2} ))\,,
 \end{array}
 \end{equation}
 and it is not excluded (we are in the non connected situation) to consider the case $ \lambda_1(D_{R_1,R_2} )= \lambda_2(D_{R_1,R_2} )$.

 We now carve holes in $\pa B_{R_1}$ such that $D_{R_1,R_2}$ becomes a domain. For $N\in \mathbb N^*:=\mathbb N \setminus \{0\}$ and $\epsilon \in [0,\frac  \pi N]$, we introduce  (see Figure $1$  for $N=2, 3$)
 \begin{equation}
 \mathfrak D(N,\epsilon)=D_{R_1,R_2} \cup_{j=0}^{N-1} \{x\in \mathbb R^2\,,\, r=R_1\,,\, \theta \in (\frac {2\pi j}{N}-\epsilon, \frac {2\pi j}{N} + \epsilon)\}\,.
 \end{equation}

 The theorem stated in \cite{HOHON2} is the following:
 \begin{theorem}\label{Theorem2.1}
 Let $N\geq 3$, then there exists $\epsilon \in (0, \frac  \pi N)$ such that $\lambda_2(\mathfrak D(N,\epsilon))$ has multiplicity $3$.
 \end{theorem}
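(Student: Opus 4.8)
\emph{Proof strategy.} The plan is to run an intermediate value argument along the one‑parameter family $\epsilon\mapsto\mathfrak D(N,\epsilon)$, which is increasing and interpolates between the disconnected domain $D_{R_1,R_2}=\mathfrak D(N,0)$ and the full disc $B_{R_2}=\mathfrak D(N,\pi/N)$. Each $\mathfrak D(N,\epsilon)$ is invariant under the dihedral group $\mathbb D_N$ of order $2N$, so the Dirichlet Laplacian $H$ on $\mathfrak D(N,\epsilon)$ decomposes along the irreducible representations of $\mathbb D_N$. Since $N\ge3$ there is a two–dimensional representation $\rho$ carrying the functions that transform like $(\cos\theta,\sin\theta)$, i.e. of angular frequency $\equiv\pm1\pmod N$; write $\Theta(\epsilon)$ for the bottom of the spectrum of $H$ on $\mathfrak D(N,\epsilon)$ in the $\rho$‑sector, an eigenvalue which automatically has multiplicity (at least) two. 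Write $\Lambda(\epsilon)$ for the \emph{second} eigenvalue of $H$ in the trivial (fully $\mathbb D_N$‑invariant) sector. Because the ground state of $\mathfrak D(N,\epsilon)$ can be chosen nonnegative it is $\mathbb D_N$‑invariant, so $\lambda_1(\mathfrak D(N,\epsilon))$ is exactly the bottom of the trivial sector and $\lambda_1(\mathfrak D(N,\epsilon))<\Lambda(\epsilon)$ for every $\epsilon$.

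First I would establish that $\epsilon\mapsto\Lambda(\epsilon)$ and $\epsilon\mapsto\Theta(\epsilon)$ are non‑increasing (domain monotonicity, used inside each fixed symmetry sector) and continuous on $[0,\pi/N]$. Continuity at $\epsilon=\pi/N$ is immediate from the monotone convergence $\mathfrak D(N,\epsilon)\uparrow B_{R_2}$; continuity at $\epsilon=0$ is the only delicate point, and follows by observing that $B_{R_2}\setminus\mathfrak D(N,\epsilon)$ is a closed arc of $\partial B_{R_1}$ of angular length $2\pi-2N\epsilon$, which increases as $\epsilon\downarrow0$ to all of $\partial B_{R_1}$ except $N$ points — a set of zero capacity — so that the form domains converge in the sense of Mosco and $\Lambda(\epsilon)\to\Lambda(0)$, $\Theta(\epsilon)\to\Theta(0)$. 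Next I would read off the endpoint values. At $\epsilon=0$ the relevant modes of $B_{R_1}$ and of the annulus $M_{R_1,R_2}$ are explicit, and one gets $\Lambda(0)=\lambda_1(M_{R_1,R_2})$ and $\Theta(0)=\min\!\big(\lambda_2(B_{R_1}),\widehat\lambda(M_{R_1,R_2})\big)$, where $\widehat\lambda(M_{R_1,R_2})\ge\lambda_2(M_{R_1,R_2})>\lambda_1(M_{R_1,R_2})$ is the lowest angular‑frequency‑one eigenvalue of the annulus; hence $\Lambda(0)<\Theta(0)$ by \eqref{ineqa}. At $\epsilon=\pi/N$ the spectrum is that of $B_{R_2}$, so $\Theta(\pi/N)=j_{1,1}^2/R_2^2$ while $\Lambda(\pi/N)=j_{0,2}^2/R_2^2$ (for $N\ge3$ the competing frequency‑$N$ radial mode $j_{N,1}^2/R_2^2$ is larger), where $j_{m,k}$ denotes the $k$‑th positive zero of $J_m$; since $j_{1,1}<j_{0,2}$ this gives $\Lambda(\pi/N)>\Theta(\pi/N)$.

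Consequently $\epsilon\mapsto\Lambda(\epsilon)-\Theta(\epsilon)$ is continuous, negative at $\epsilon=0$ and positive at $\epsilon=\pi/N$, so it vanishes at a first point $\epsilon^*\in(0,\pi/N)$, and $\Lambda(\epsilon)<\Theta(\epsilon)$ for $0\le\epsilon<\epsilon^*$. At $\epsilon^*$ the common value $\Lambda(\epsilon^*)=\Theta(\epsilon^*)$ is an eigenvalue of $\mathfrak D(N,\epsilon^*)$ of multiplicity at least $1+2=3$: one eigenfunction from the trivial sector and a two‑dimensional space from the $\rho$‑sector. To finish I would check that this common value is in fact $\lambda_2(\mathfrak D(N,\epsilon^*))$, i.e. that for $0\le\epsilon\le\epsilon^*$ the only eigenvalue of $\mathfrak D(N,\epsilon)$ strictly below $\Lambda(\epsilon)$ is $\lambda_1(\mathfrak D(N,\epsilon))$. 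This is clear at $\epsilon=0$ — under \eqref{ineqa} the spectrum of $D_{R_1,R_2}$ begins $\lambda_1(B_{R_1})<\lambda_1(M_{R_1,R_2})=\Lambda(0)<\Theta(0)\le\cdots$, and the bottom of every symmetry sector other than the trivial one and $\rho$ is $\ge\Theta(0)$ — hence, by continuity, for $\epsilon$ small; one then has to propagate it up to $\epsilon^*$. Granting this, $\lambda_2(\mathfrak D(N,\epsilon^*))=\Lambda(\epsilon^*)=\Theta(\epsilon^*)$ has multiplicity $\ge3$, and Cheng's bound — in the version valid for these non‑smooth domains that is established below — forces the multiplicity to be exactly $3$.

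The main obstacle is precisely this last verification: the bottoms of the remaining symmetry sectors start strictly above $\Lambda(0)$ but are only known to be non‑increasing in $\epsilon$, so one needs a quantitative comparison — for instance via the monotonicity in the angular frequency $m$ of the radial operators $-\partial_r^2-r^{-1}\partial_r+m^2 r^{-2}$, combined with domain monotonicity in $r$ — guaranteeing that the \emph{first} time $\Lambda(\epsilon)$ is overtaken by a lower sector, that sector is the two‑dimensional $\rho$‑sector (producing multiplicity $3$) rather than a one‑dimensional sector (which would only give multiplicity $2$). This is the argument that must be added to the original proof in \cite{HOHON2} to complete it; the numerical results reported later show that the verification is genuinely needed, and that for some configurations the argument has to be modified.
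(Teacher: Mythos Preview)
Your outline has the right endpoint analysis, but the step you call ``the main obstacle'' is a genuine gap, not a formality. You find the first crossing $\epsilon^*$ of the two particular curves $\Lambda$ (second eigenvalue in the trivial $\mathbb D_N$-sector) and $\Theta$ (bottom of the $\rho_1$-sector), and then need that no \emph{other} sector has already dropped below $\Lambda(\epsilon^*)$. Your suggested remedy, monotonicity in $m$ of the radial operators $-\partial_r^2-r^{-1}\partial_r+m^2r^{-2}$, does not apply here: for $\epsilon\in(0,\pi/N)$ the domain $\mathfrak D(N,\epsilon)$ is only $G_N$-invariant, not rotationally invariant, so one cannot separate variables and there is no radial reduction available. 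Comparing sector bottoms on such domains is delicate; the paper itself notes (in its discussion of the original \cite{HOHON2} argument) that even the single inequality $\lambda_1(H^{(N/2)})>\lambda_1(H^{(1)})$ requires the nodal-set comparison machinery of \cite{HHOHON}. Moreover, by working with $\mathbb D_N$ rather than $G_N$ you have \emph{more} one-dimensional sectors to control (for $N$ odd the sign representation, for $N$ even up to three nontrivial one-dimensional representations), and nothing in your sketch handles them.

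The paper avoids this obstacle entirely by reorganising the argument so that it is about $\lambda_2$ from the start. Using only the cyclic group $G_N$, write $\sigma(H)=\bigcup_{0\le\ell\le N/2}\sigma(H^{(\ell)})$; for $0<\ell<N/2$ every eigenspace of $H^{(\ell)}$ has even dimension. Set
\[
A=\Big\{\epsilon:\lambda_2(\epsilon)\in\textstyle\bigcup_{0<\ell<N/2}\sigma(H^{(\ell)}(\epsilon))\Big\},\qquad
B=\Big\{\epsilon:\lambda_2(\epsilon)\in\sigma(H^{(0)}(\epsilon))\cup\sigma(H^{(N/2)}(\epsilon))\Big\}.
\]
By continuity of $\lambda_2$ and of each sector spectrum (Stollmann), both sets are closed; their union is $[0,\pi/N]$; and your own endpoint computations give $0\in B$ and $\pi/N\in A$. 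Connectedness of the interval forces $A\cap B\neq\emptyset$, and at any such $\epsilon$ the eigenvalue $\lambda_2$ \emph{itself} lies simultaneously in a one-dimensional-type sector and in an even-multiplicity sector, hence has multiplicity $\ge 3$; the extended Cheng bound gives equality. No crossing of auxiliary curves has to be located, and no comparison among the remaining sectors is needed. This is the actual correction to \cite{HOHON2}, not the quantitative comparison you describe.
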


 We  prove below that the theorem is correct. But the proof given  in \cite{HOHON2}  works only for even
integers $N\ge 4$ and in this case there is a need for additional arguments. So we improve in this paper the result in  \cite{HOHON2}  by giving
an example $\Omega:= \mathfrak D(3,\epsilon) $ where the number of components of $\pa \Omega $ equals~4,  hence $N=3$.

 \begin{remark}
Theorem \ref{Theorem2.1} leads to the
following question:\\
 Is there a bounded domain $\Omega\subset \rz^2$ whose boundary $\pa \Omega$ has strictly  less than 4 components so that $\lambda_2(\Omega)$  has multiplicity 3? 
  This is also a motivation for analyzing the cases $N=1, 2$.
 The natural conjecture (see Remark \ref{Remark4.3} for further discussion)  would be that
for simply connected domains $\Omega$, $\lambda_2(\Omega)$ has at most multiplicity~$2$.
\end{remark} 

\begin{remark}
 For a specific choice of the pair $(R_1,R_2)$ which will be introduced in Subsection  \ref{ss8.1}, the numerics (see Figure $2$) illustrates 
 the statement of  Theorem \ref{Theorem2.1} when $N=3$ and $N=4$. Although the precision is not very good for $\epsilon$ close to $0$ and $\frac{\pi}{3}$ (see Section \ref{snum}), we can  predict  as $N=3$  a second eigenvalue  of multiplicity $3$ for $\epsilon \sim 0.29 $.  A second crossing appears for $\epsilon \sim 0.96$ 
  but corresponds to a third eigenvalue of multiplicity $3$. The eigenvalues correspond (with the notation of Section \ref{s3}) to $\ell=0$ and to $\ell =1$, the eigenvalues 
  for $\ell=1$ having multiplicity $2$. When $N=4$, we also see a first crossing for $\epsilon \sim 0.54$ where the multiplicity becomes $3$, 
 as the theory will show. Two other crossings occur for $\epsilon=0.95$ and  $\epsilon=1.565$. The eigenvalues correspond (with the notation of Section \ref{s3}) to $\ell=0$, $\ell =1$ and $\ell=1$, the eigenvalues 
  for $\ell=1$ having multiplicity $2$.The eigenvalues for $\ell=0$ and $2$ are simple for $\epsilon\in (0,\frac \pi 2)$.
        
   \begin{figure}[!ht]\label{fig0}
 \begin{center}
\includegraphics[width=11.5cm]{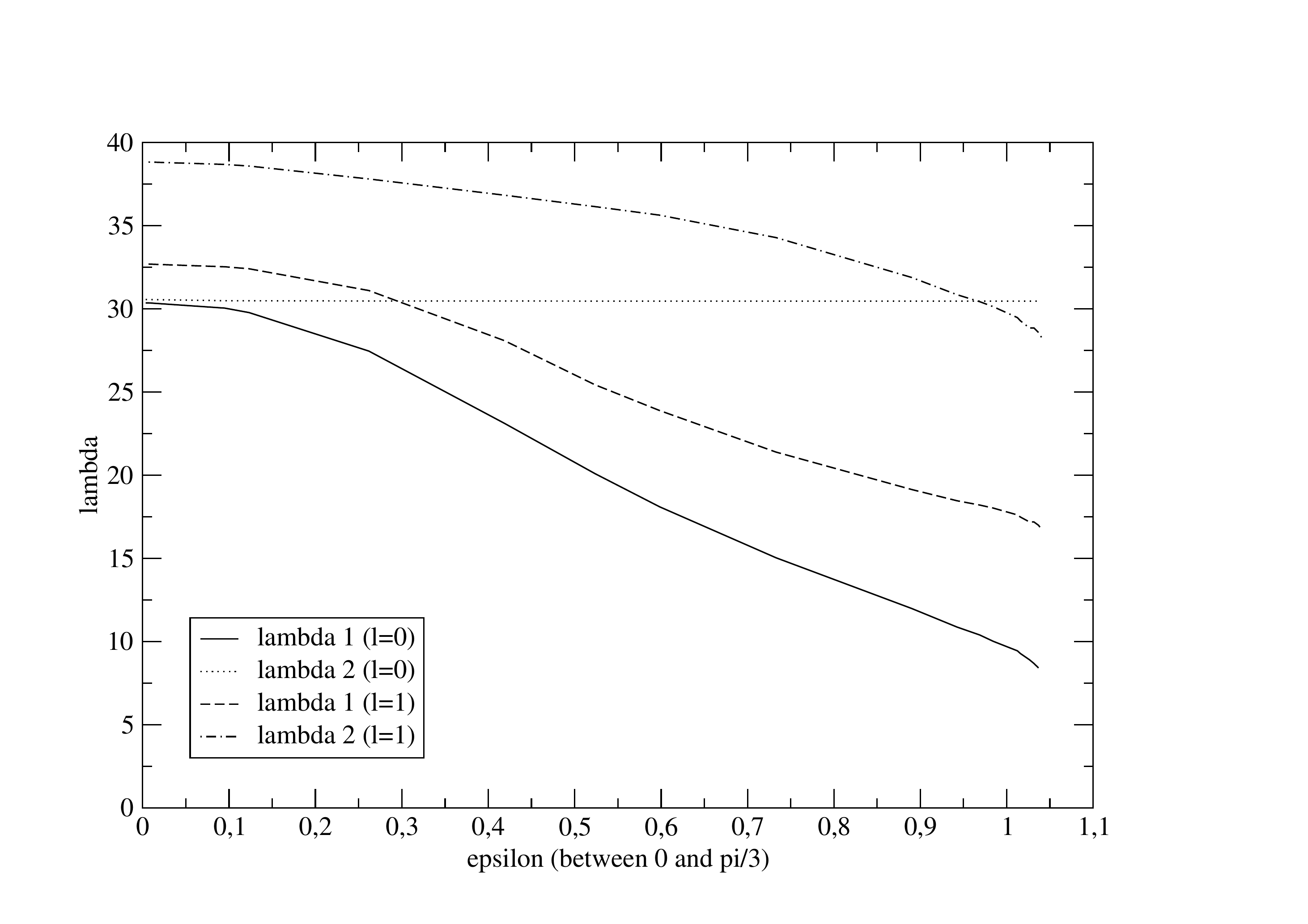}
 \caption{N=3. Six lowest eigenvalues of the Laplacian in $\mathfrak D(N,\epsilon))$  in function of $\epsilon \in (0,\frac \pi 3)$, with  $R_1= 0.4356$, $R_2=1$.} 
  \end{center}
 \end{figure}

    \begin{figure}[!ht]\label{figN=4}
 \begin{center}
\includegraphics[width=11.5cm]{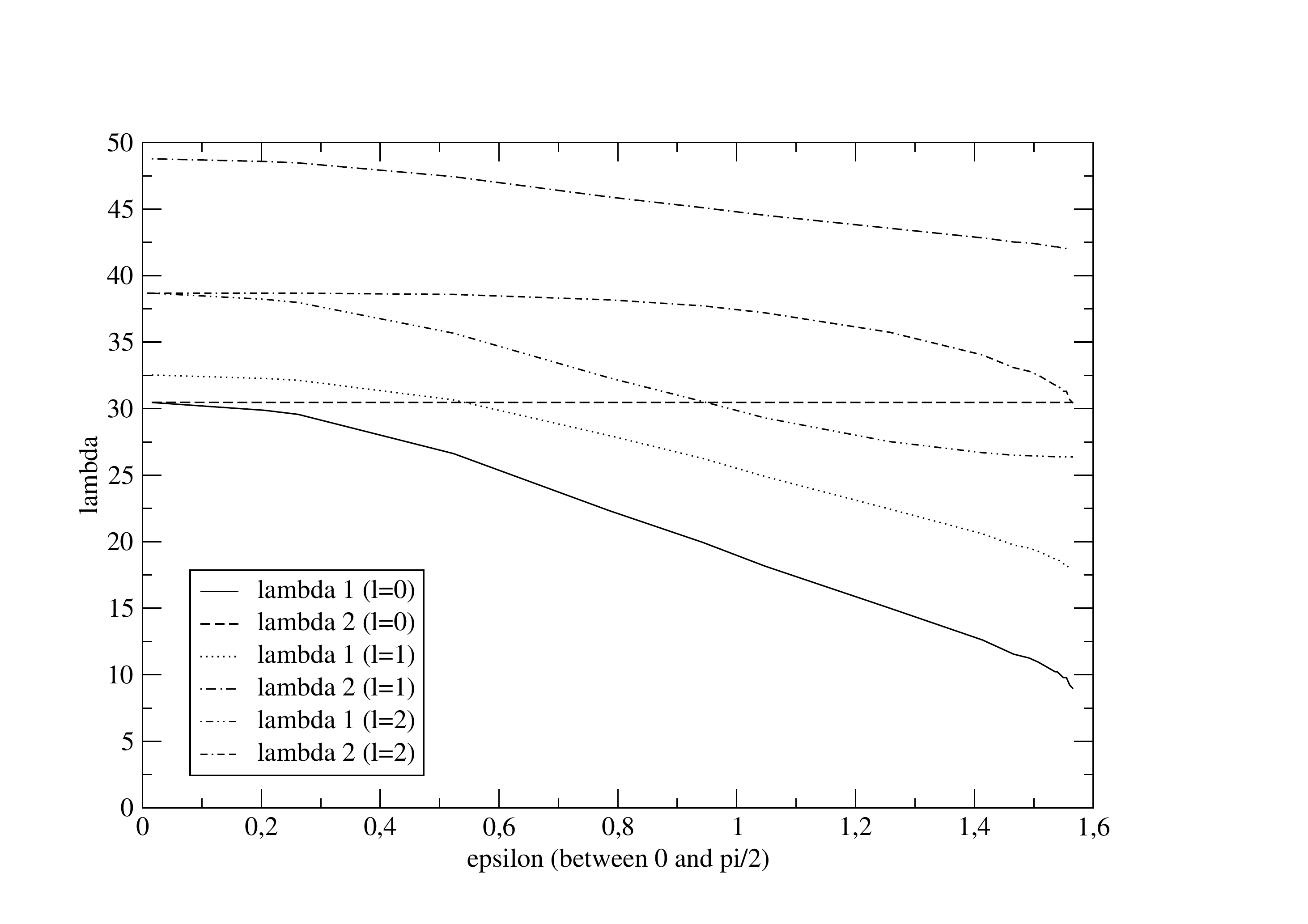}
 \caption{N=4. Eight lowest eigenvalues of the Laplacian in $\mathfrak D(N,\epsilon))$  in function of $\epsilon \in (0,\frac \pi 2)$, with  $R_1= 0.4356$, $R_2=1$.} 
  \end{center}
 \end{figure}

 \end{remark}

 We now  explain what were the difficulties arising in the sketch of the proof given in \cite{HOHON2}. \\
 
  The authors introduce a notion of symmetry or antisymmetry with respect to the inversion  $x\mapsto -x$ but  this does not work for odd $N$ since
$\mathfrak D(N,\ep)$ has no center of inversion. So the proof can only work for N even.\\

Considering $N$ even ($N\geq 4$), the idea behind the proof in \cite{HOHON2} is that there is a crossing for increasing $\epsilon$  between an eigenvalue associated with an antisymmetric eigenspace of multiplicity $2$ and 
an  eigenvalue associated with a symmetric eigenspace.  With the considered 
  antisymmetry proposed by the authors, it seems wrong that the multiplicity $2$ results simply  from the information that the eigenvalue corresponds to a non trivial antisymmetric eigenspace.  We will give a theoretical analysis in Section \ref{s7} 
   completed by a  numerical  study  in Section~\ref{snum} giving evidence that this guess is at least wrong in the simpler case $N=2$ which is not considered in \cite{HOHON2}. Hence  one has also to change the argument for even $N$.

 \section{Symmetry spaces}\label{s3}
  Before proving Theorem \ref{Theorem2.1},  we recall  some basic representation theory. 
We consider a Hamiltonian which is the Dirichlet realization of the Laplacian in an open set $\Omega$ which is invariant by the action of the group $G_N$ generated by the rotation $g$ by $\frac {2\pi}{N}$. The Hilbert space is $\mathcal H: = L^2(\Omega,\mathbb R)$ but it is also convenient to work in $\mathcal H_{\mathbb C}:= L^2(\Omega,\mathbb C)$. In this case,  it is natural to  analyze the eigenspaces attached to the irreducible representations of the group  $G_N$.  This is standard, see for example \cite{HHOHON} and references therein, but  note that these authors work with a larger group of symmetry, i.e. the dihedral group $\mathbb D_{2N}$. Here we prefer to start with the smaller group $G_N$  and it is important to note that  we do not assume in our work  that $\Omega$ is 
 homeomorphic to a disk or to an annulus. The theory of this section will in particular apply for the family of open sets $\Omega=  \mathfrak D(N,\epsilon) $ (which  satisfy the $\mathbb D_{2N}$-symmetry).  Hence in this case, 
  Theorems 1.2 and 1.3 of \cite{HHOHON} do not fully apply.\\
 The theory is simpler for  complex Hilbert spaces i.e. $\mathcal H_\mathbb C:=L^2(\Omega,\mathbb C)$, but the multiplicity property appears when considering operators on real Hilbert spaces, i.e $\mathcal H: =L^2(\Omega,\mathbb R)$. 
  If we work in $\mathcal H_\mathbb C$, we introduce for $\ell = 0, \cdots, N-1$,
 \begin{equation}\label {Bl}
\mathcal  B_\ell =\{w\in \mathcal H_{\mathbb C} \;|\;  g w= e^{ 2\pi i\ell/N}w\}\,.
\end{equation}
For $\ell=0$, this corresponds to the invariant situation. Hence in the model above (where $\Omega = B_{R_2}$)  $u_0$ and $u_6$  belong to $\mathcal B_0$. 
We also observe that the complex conjugation sends $\mathcal B_\ell$  onto $\mathcal B_{N-\ell}$. Hence, except  in the cases  $\ell=0$ and $\ell=\frac {N}{2}$ the corresponding eigenspace are of even dimension.\\ The second case appears only if $N$ is even.

For $2\ell \neq N$, one can alternately come back to real spaces by introducing  for $0< \ell < \frac  N2$ ($\ell \in \mathbb N$)
\begin{equation}
\mathcal C_\ell =\mathcal  B_\ell \oplus  \mathcal B_{N-\ell}\,
\end{equation}
 and observing that $\mathcal C_\ell$  can be recognized as the complexification
of the real space $\mathcal  A_\ell$
\begin{equation}\label{Al}
\mathcal  A_\ell=\{u\in \mathcal H\;|\; u-2\cos(2\ell\pi/N) g u+g^2u=0\}
\end{equation}
such that 
\begin{equation}
\mathcal C_\ell = \mathcal  A_\ell \otimes \mathbb C\;
\end{equation}
 where \eqref{Al} follows from an easy  computation based on \eqref{Bl}.\\
 For $\ell=0$ and $\ell=\frac  N 2$ (if $N$ is even), we define $\mathcal  A_\ell$ by
 \begin{equation}
\mathcal  B_\ell =  \mathcal  A_\ell \otimes \mathbb C\,.
 \end{equation}
 
 Under the invariance condition on the domain, the Dirichlet  Laplacian commutes with the natural action of $g$ in $L^2$. Hence we get for $0 \leq \ell \leq N/2$ a family of well defined selfadjoint operators $H^{(\ell)}$ obtained by restriction
  of $H$ to $\mathcal A_\ell$ (with domain $D(H)\cap \mathcal A_\ell$). Note that except for $\ell=0$ and $\ell = \frac {N}{2}$ all the eigenspaces of $H^{(\ell)}$ have even multiplicity.
  
  The other point is that Stollmann's theory \cite{Sto} works for the spectrum of $H^{(\ell)}(\epsilon,N)$ associated with the Dirichlet realization $H(\epsilon,N)$ of the Laplacian in $\mathfrak D_{N,\epsilon}$. Hence we have continuity and monotonicity with respect to $\epsilon$ of the eigenvalues. Note also that
  $$
  \sigma (H(\epsilon,N))=\cup_{0 \leq \ell\leq \frac  N2} \sigma (H^{(\ell)}(\epsilon,N))\,.
  $$
  
  \begin{remark}\label{rem3.1}
  When $N$ is even, a particular role is played by $g^\frac  N2$ which corresponds to the inversion considered in \cite{HOHON2}. One can indeed decompose the Hilbert space $\mathcal H$ (or $\mathcal H_\mathbb C$) using the symmetry with respect to $g^\frac  N2$ and get the decomposition
  \begin{equation}
  \mathcal H =\mathcal H^{even} \oplus \mathcal H^{odd}\,,
  \end{equation}
  and 
  \begin{equation}
  H(\epsilon,N)= H^{even}(\epsilon,N) \oplus H^{odd}(\epsilon,N)\,.
  \end{equation}
  One can compare this decomposition with the previous one. We observe that $\mathcal A_\ell$ belongs to $\mathcal H^{even} $ if $\ell$ is even and to  $\mathcal H^{odd} $
   if $\ell$ is odd.
  
  \end{remark}
  
    \section{Upper bound: the regularity assumptions in Cheng's statement revisited}
    In \cite{Ch}, S.Y. Cheng proved that the multiplicity of the second eigenvalue is at most~$3$. 
  Cheng's proof is actually using a regularity assumption which is not satisfied by $ D(N,\epsilon) $. This domain has indeed cracks and we need a  description  of the nodal line structure near corners or  cracks. But we will explain how to complete the proof in this case. We recall that for an eigenfunction $u$ the nodal set  $N(u)$ of $u$ is defined by
  $$
  \mathcal N (u):= \overline{\{x\in \Omega\,,|\, u(x)=0\}}\,.
  $$
  For other reasons (this was used in the context of spectral minimal partitions) this analysis was needed and treated  in the paper of Helffer, Hoffmann-Ostenhof, and Terracini \cite{HHOT} (Theorem~2.6).  With this complementary analysis 
   near the cracks, we can follow the main steps of the proof given in the first part of  \cite{HOMN} (Theorem B). This proof  includes an extended version of Euler's Polyhedral formula 
   (Proposition 2.8 in \cite{HOMN} with a stronger regularity assumption). 
  \begin{proposition} Let $\Omega$ be a $C^{1,+}$-domain\footnote{$C^{1,+}$ means $C^{1,\epsilon}$ for some $\epsilon >0$.} with possibly corners of opening\footnote{$\alpha=2$ corresponds to the crack case.} $\alpha \pi$ for $0 < \alpha\leq 2$.
   If $u$ is an eigenfunction of the Dirichlet Laplacian in $\Omega$, $\mathcal N$ denotes the nodal set of $u$ and $ \mu(\mathcal N) $ denotes the cardinality of the components of $\Omega \setminus \mathcal N$,   i.e. the number of nodal domains, then
  \begin{equation}\label{Euler}
  \mu(\mathcal N) \geq \sum_{x\in \mathcal N\cap \Omega} (\nu(x) -1) +2\,,
  \end{equation}
  where $\nu(x)$ is the multiplicity of the critical point $x\in \mathcal N$ (i.e. the number of lines crossing at $x$).
  \end{proposition}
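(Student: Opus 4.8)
The plan is to reduce the statement to a purely topological counting argument on a planar graph built from the nodal set, following the strategy of Proposition 2.8 in \cite{HOMN} but with the weaker regularity. First I would recall the local structure of $\mathcal{N}(u)$ under the stated hypotheses: in the interior, by the classical results on nodal sets of eigenfunctions (Bers' theorem / the vanishing-order analysis), $\mathcal{N}$ is locally a finite union of $C^1$ arcs crossing at each interior zero $x$ with even multiplicity $2\nu(x)$, i.e. $\nu(x)$ lines through $x$; near a $C^{1,+}$ boundary point the nodal set meets $\partial\Omega$ in finitely many arcs, hitting $\partial\Omega$ transversally or as isolated boundary zeros; and near a corner of opening $\alpha\pi$ with $0<\alpha\le 2$ — in particular at a crack tip ($\alpha=2$) — the local description of \cite{HHOT} (Theorem 2.6) guarantees that $\mathcal{N}$ consists of finitely many arcs emanating from the corner, so that $\Omega\setminus\mathcal{N}$ has only finitely many components and $\mathcal{N}$ has no accumulation of singularities. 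This is the step that genuinely uses the new regularity input; everything afterward is combinatorics.

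Next I would set up the graph $\mathcal{G}$ embedded in $\overline{\Omega}$: vertices are the interior critical points of $\mathcal{N}$ together with the points where $\mathcal{N}$ meets $\partial\Omega$ and the corners/crack tips (plus, if needed, artificial degree-$2$ vertices so that every edge is an arc and $\mathcal{G}$ is a genuine finite CW-structure on a compact surface with boundary); edges are the closures of the arc components of $\mathcal{N}\setminus(\text{vertices})$ together with the boundary arcs of $\partial\Omega$ cut at those vertices; faces are the connected components of $\Omega\setminus\mathcal{N}$, of which there are $\mu(\mathcal{N})$. I would then apply Euler's formula for a connected graph embedded in a surface. The cleanest route is the one in \cite{HOMN}: pass to the doubled domain or argue that $\overline\Omega$ is homeomorphic to a compact surface $S$ (here a sphere with $b$ disks removed, $b=$ number of boundary components), so that $V-E+F = 2-b$ with $F$ counting the unbounded complementary face(s) of the graph as well. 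Keeping careful track of how many faces are "outer" versus genuine nodal domains yields $\mu(\mathcal{N}) = E - V + (\text{correction depending on } b)$.

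The heart of the estimate is the degree bookkeeping. At an interior critical point $x$, $\nu(x)$ nodal lines cross, contributing degree $2\nu(x)$; at a point where $\mathcal{N}$ meets a smooth boundary point the local degree in $\mathcal{G}$ (counting the two incident boundary arcs) is $2$ or $3$; at a corner/crack tip one counts the incident nodal arcs plus the incident boundary arcs. Using $2E = \sum_{v} \deg(v)$ together with these local counts, and the elementary fact that every vertex has degree $\ge 2$ (from the even-order vanishing in the interior and the crack geometry at tips), one isolates the contribution $\sum_{x\in\mathcal{N}\cap\Omega}(\nu(x)-1)$ and absorbs the boundary vertices and corners into a term that is bounded below by the topological constant, producing the claimed $\mu(\mathcal{N})\ge \sum_{x\in\mathcal N\cap\Omega}(\nu(x)-1)+2$. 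I would also note that when $\mathcal{N}$ has several connected components one either applies the inequality componentwise and adds, or adds auxiliary edges joining the components (each such edge only increases $E$ and can only increase the right bound weakly), which is exactly where one must be slightly careful that the "$+2$" is not eroded.

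The main obstacle I expect is precisely the last point together with the crack tips: one must verify that a crack tip, where $\partial\Omega$ "folds back" and the two sides of the crack are distinct boundary arcs meeting at $\alpha=2$, is correctly incorporated into the surface model so that the Euler characteristic used is the right one, and that nodal arcs ending \emph{at} such a tip (rather than at an interior point) are not miscounted — this is exactly the configuration that Cheng's original smooth-boundary argument does not see and that forces the appeal to \cite{HHOT}. Once the CW-structure on the compact surface $\overline\Omega$ is set up honestly, with crack tips as vertices of the appropriate degree, the inequality follows from the same arithmetic as in \cite{HOMN}.
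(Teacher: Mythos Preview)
Your plan is correct and follows essentially the same approach as the paper, which does not give a detailed proof but simply indicates that one combines the local description of the nodal set near corners and crack tips from \cite{HHOT} (Theorem~2.6) with the Euler-formula argument of Proposition~2.8 in \cite{HOMN}. Your write-up is in fact more explicit than the paper's own treatment; the only point to watch is exactly the one you flag, namely the correct CW bookkeeping at crack tips and in the multi-component case, which the cited references handle.
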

   For a second eigenfunction $\mu(\mathcal N)=2$, and the upper bound of the multiplicity by $3$ comes by  contradiction. Assuming that the multiplicity of the second eigenvalue   is $\geq 4$, one can, for any $x\in \Omega$, construct some $u$ in the second eigenspace such that $\nu(x)\geq 2$. This gives the contradiction with \eqref{Euler}.  Hence we have
   \begin{proposition}
    Let $\Omega$ be a $C^{1,+}$-domain with possibly corners of opening $\alpha \pi$ for $0 < \alpha\leq 2$. Then the multiplicity of the second eigenvalue of the Dirichlet Laplacian in $\Omega$ is not larger than $3$.
    \end{proposition}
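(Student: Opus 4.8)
The plan is to argue by contradiction, following the strategy of Theorem B in \cite{HOMN} but using the refined Euler-type inequality of the preceding Proposition, which is valid in the $C^{1,+}$-with-corners setting that includes cracks ($\alpha=2$). So suppose the second eigenvalue $\lambda_2(\Omega)$ has multiplicity $m\geq 4$, and let $E$ be the corresponding eigenspace, $\dim E = m$. The first step is the standard local structure of nodal sets near an interior point: if $u$ is an eigenfunction and $x_0\in\Omega$ is an interior zero, then near $x_0$ the nodal set $\mathcal N(u)$ looks like $\nu(x_0)\geq 1$ arcs meeting at $x_0$ with equal angles, where $\nu(x_0)$ is the vanishing order of $u$ at $x_0$; this is classical (Courant–Hilbert, or \cite{HHOT}) and needs only interior elliptic regularity, so the cracks play no role here.

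The second step is the dimension-counting trick. Fix any interior point $x_0\in\Omega$. The map $u\mapsto (u(x_0),\nabla u(x_0))\in\mathbb R^3$ is linear on $E$; since $\dim E\geq 4 > 3$, its kernel is nontrivial, so there exists a nonzero $u\in E$ with $u(x_0)=0$ and $\nabla u(x_0)=0$, hence $\nu(x_0)\geq 2$. Thus for \emph{every} interior point we can find a second eigenfunction with a critical zero there. Now apply the Proposition to such a $u$: since $u$ is a second eigenfunction, Courant's nodal domain theorem gives $\mu(\mathcal N(u))\le 2$, and $\mu(\mathcal N(u))\ge 1$ trivially, so in fact $\mu(\mathcal N(u))=2$ (an eigenfunction for $\lambda_2>\lambda_1$ changes sign, so it has at least two nodal domains). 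But the inequality \eqref{Euler} then forces
\[
2 = \mu(\mathcal N(u)) \geq \sum_{x\in\mathcal N(u)\cap\Omega}(\nu(x)-1) + 2 \geq (\nu(x_0)-1) + 2 \geq 1 + 2 = 3,
\]
a contradiction. Hence $m\leq 3$.

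The only genuinely delicate point is making sure the Proposition (the extended Euler formula) really does apply to $\Omega$ and to the eigenfunction $u$ we produced: this requires the boundary/corner/crack analysis of the nodal set — that near a crack tip or corner of opening $\alpha\pi$ the nodal set has the expected finite graph structure and contributes no pathological terms — which is exactly what is supplied by Theorem~2.6 of \cite{HHOT} and is the reason the statement is phrased for $C^{1,+}$-domains with corners. Once that structural input is in hand, the dimension count is soft and the contradiction is immediate; I expect no further obstacle. (One should also note $\mathcal N(u)\cap\Omega$ is nonempty and, by the local structure, $\nu(x)\geq 1$ at each of its points, so dropping all terms except $x_0$ in the sum is legitimate.)
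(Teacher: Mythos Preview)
Your proof is correct and follows essentially the same approach as the paper: contradiction from multiplicity $\geq 4$, the dimension count $u\mapsto (u(x_0),\nabla u(x_0))\in\mathbb R^3$ to produce a second eigenfunction with $\nu(x_0)\geq 2$, and then a clash with the extended Euler inequality \eqref{Euler} since $\mu(\mathcal N)=2$. You have simply spelled out the details (the linear map, Courant's theorem, the nonnegativity of the dropped terms) that the paper leaves implicit.
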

  
    \begin{remark}\label{Remark4.3}
    An upper bound of the multiplicity by $2$ is obtained by C.S. Lin when $\Omega$ is convex (see \cite{Lin}). As observed at the end of Section 2 in \cite{ZL}, Lin's theorem can be extended to the  case of a simply connected domain for which the nodal line conjecture holds. If the multiplicity of the second  eigenvalue is larger than 2,   one can indeed find in the associated spectral space an eigenfunction whose nodal set contains a point in the boundary where two half lines hit the boundary.  This will contradict either the nodal line conjecture or Courant's theorem. See also \cite{HOHON2}  for some sufficient conditions on domains for the nodal
line conjecture to  hold.
      \end{remark}

 \section{Proof of Theorem \ref{Theorem2.1}}
 
 We first observe that for the disk of radius $R$ we have
 \begin{equation}\label{specdisc}
 \lambda_1(B_{R})<  \lambda_2(B_{R})= \lambda_3(B_{R}) <  \lambda_4(B_{R})= \lambda_5(B_{R}) < \lambda_6(B_{R})\,.
 \end{equation}
The eigenfunctions  $u_1$ and $u_6$ are radial. We will use this property with $R=R_2$.
 \begin{proposition}
 For $N\geq 3$, there exists $\epsilon \in (0, \frac {\pi}{N})$ such that $\lambda_2(H(\epsilon,N))$ belongs to $\sigma (H^{(\ell)}(\epsilon,N))$ for some $0 < \ell < \frac  N2$ AND to $\sigma (H^{(\ell)}(\epsilon,N))$ 
  for $\ell=0$ or (in the case $N$ even) $\frac  N2$. 
  In  particular, the multiplicity of $\lambda_2$ for this value of $\epsilon$ is exactly $3$.
  \end{proposition}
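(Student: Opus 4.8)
The plan is to analyze the three operators $H^{(0)}$, $H^{(1)}$ (and $H^{(N/2)}$ when $N$ is even) separately as functions of $\epsilon$ and track their two lowest eigenvalues, using the continuity and monotonicity supplied by Stollmann's theory. First I would treat the two endpoint regimes. When $\epsilon = 0$ the domain $\mathfrak D(N,0)$ degenerates to $D_{R_1,R_2} = B_{R_1} \cup M_{R_1,R_2}$ (the cracks are closed), so the spectrum is the union of the spectra on the disk $B_{R_1}$ and on the annulus $M_{R_1,R_2}$. Choosing $(R_1,R_2)$ so that \eqref{ineqb} holds, one gets from \eqref{specannulusa} that $\lambda_2(H(0,N))$ is a \emph{simple} eigenvalue: it is the first eigenvalue of whichever of $B_{R_1}$, $M_{R_1,R_2}$ has the larger $\lambda_1$, and that ground state is radial, hence lies in $\mathcal A_0$ and not in $\mathcal A_\ell$ for $\ell>0$. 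At the other endpoint $\epsilon = \pi/N$ the cracks have opened up completely and $\mathfrak D(N,\pi/N) = B_{R_2}$ is the full disk, whose spectrum is described by \eqref{specdisc}: there $\lambda_2 = \lambda_3$ has multiplicity $2$, realized by the pair $\{x_1/r, x_2/r\}\cdot(\text{radial profile})$, which spans exactly the space $\mathcal C_1 = \mathcal A_1 \otimes \mathbb C$ for the rotation by $2\pi/N$. So at $\epsilon = \pi/N$, $\lambda_2(H(\pi/N,N))$ lies in $\sigma(H^{(1)})$ with multiplicity $2$ and is \emph{not} in $\sigma(H^{(0)})$.

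Next I would use monotonicity. Opening the cracks enlarges the domain, so every eigenvalue $\lambda_k(H^{(\ell)}(\epsilon,N))$ is nonincreasing in $\epsilon$. The key quantitative point is to compare $\lambda_1(H^{(1)}(\epsilon,N))$ — which at $\epsilon=0$ equals $\min(\lambda_2(B_{R_1}),\lambda_2(M_{R_1,R_2}))$ restricted to the $\ell=1$ sector, a value strictly above $\lambda_2(H(0,N))$ — with $\lambda_2(H^{(0)}(\epsilon,N))$. At $\epsilon = 0$ we have $\lambda_1(H^{(1)}(0,N)) > \lambda_2(H^{(0)}(0,N)) = \lambda_2(H(0,N))$, while at $\epsilon = \pi/N$ we have $\lambda_1(H^{(1)}(\pi/N,N)) = \lambda_2(B_{R_2}) < \lambda_3(B_{R_2}) \le \lambda_2(H^{(0)}(\pi/N,N))$ (the $\ell=0$ sector of the disk has simple eigenvalues, each being some $\lambda_k(B_{R_2})$ with $k \notin\{2,3\}$, and in particular its second one is $\lambda_6(B_{R_2}) > \lambda_2(B_{R_2})$). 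Hence by the intermediate value theorem applied to the continuous function $\epsilon \mapsto \lambda_1(H^{(1)}(\epsilon,N)) - \lambda_2(H^{(0)}(\epsilon,N))$, there is $\epsilon^* \in (0,\pi/N)$ where the two curves cross. One then checks that at $\epsilon^*$ this common value is in fact $\lambda_2(H(\epsilon^*,N))$: since $\lambda_1(H^{(0)}(\epsilon,N)) = \lambda_1(B_{R_1}$-type ground state$)$ stays below and separated, and the only other candidate, $\lambda_1(H^{(N/2)})$ when $N$ is even, can be pushed above by the choice of $(R_1,R_2)$ (one verifies $\mathcal A_{N/2}$ contains no low-lying mode below the crossing value). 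Because the $\ell=1$ eigenvalue always carries even multiplicity (complex conjugation pairs $\mathcal B_1$ with $\mathcal B_{N-1}$), multiplicity $2$ there plus the simple $\ell=0$ eigenvalue give total multiplicity exactly $3$, and Cheng's bound (the Proposition on $C^{1,+}$-domains with cracks proved above) shows it cannot exceed $3$.

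The main obstacle I expect is the book-keeping needed to \emph{rule out extra coincidences} at the crossing point $\epsilon^*$, i.e. to guarantee that at $\epsilon^*$ the value $\lambda_1(H^{(1)}(\epsilon^*,N)) = \lambda_2(H^{(0)}(\epsilon^*,N))$ is genuinely the second eigenvalue of the full operator and that $H^{(N/2)}$ (for even $N$) does not contribute an additional mode at the same level, which would spoil the multiplicity count. This requires the inequality \eqref{ineqb} together with some control — via Dirichlet–Neumann bracketing or explicit estimates on the annulus and disk — on where the $\ell \ge 1$ and $\ell = N/2$ sectors sit relative to the $\ell = 0$ ground state throughout $[0,\pi/N]$. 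A secondary subtlety is choosing $\epsilon^*$ to be the \emph{first} crossing, so that no sector has already dropped a third eigenvalue down to that level; monotonicity makes this a matter of ordering the curves at $\epsilon = 0$ and invoking that the $\ell=0$ second eigenvalue $\lambda_2(H^{(0)})$ decreases slowly enough (again from \eqref{ineqb} and \eqref{specannulusa}). Once these separations are secured, the argument closes cleanly.
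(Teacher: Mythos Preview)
Your endpoint analysis is correct and matches the paper's, but your route to the crossing is different and leaves open the very gap you flag. The paper does \emph{not} track the specific pair of curves $\lambda_1(H^{(1)}(\epsilon,N))$ and $\lambda_2(H^{(0)}(\epsilon,N))$ and then try to verify after the fact that their intersection value is $\lambda_2$ of the full operator. Instead it argues by contradiction directly on $\lambda_2(H(\epsilon,N))$: if this continuous function of $\epsilon$ never lies simultaneously in $\cup_{0<\ell<N/2}\sigma(H^{(\ell)})$ and in $\sigma(H^{(0)})\cup\sigma(H^{(N/2)})$, then (both sets of $\epsilon$'s being closed, their union being the whole interval) connectedness forces it to stay in one camp for all $\epsilon$; the endpoint behavior you computed ($\ell=0$ near $\epsilon=0$, $\ell=1$ near $\epsilon=\pi/N$) then gives the contradiction. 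This never needs to identify which $\ell$ is involved at the crossing, and in particular never needs to rule out interference from $H^{(\ell)}$ with $1<\ell\le N/2$.

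Your approach, by contrast, leaves exactly that interference unresolved: at your $\epsilon^*$ nothing prevents some $\lambda_1(H^{(\ell)}(\epsilon^*,N))$ with $1<\ell<N/2$ from already sitting strictly below the crossing value, which would make your $\lambda^*$ equal to $\lambda_3$ or higher rather than $\lambda_2$. Your proposed fix (``push $H^{(N/2)}$ above by choice of $(R_1,R_2)$'') is both unjustified and misdirected --- the statement explicitly allows $\ell=N/2$ on the second side, so $H^{(N/2)}$ is not the obstruction; the issue is the intermediate $\ell$'s. The natural way to close your gap is the dihedral ordering $\lambda_1(H^{(1)})\le\lambda_1(H^{(2)})\le\cdots$ from \cite{HHOHON}, but that requires the extra $\mathbb D_{2N}$ symmetry and is precisely the detour the paper's contradiction argument was designed to avoid (see the ``Comparison with the former proof'' paragraph, where the paper explains that the older approach needs this ingredient). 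So your outline is repairable, but as written it is incomplete, and the paper's argument is both shorter and strictly more robust.
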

  \begin{proof}~\\
  Note that the condition $N\geq 3$ implies the existence of at least one $\ell\in (0,\frac  N2)$.\\
  We now proceed by contradiction. Suppose the contrary. By continuity of the second eigenvalue, we should have
  \begin{itemize}
  \item  either $\lambda_2(H(\epsilon, N))$ belongs to $ \cup_{0 < \ell < \frac  N2} \sigma (H^{(\ell)} (\epsilon, N)))$ and  not to \break 
   $\sigma (H^{(0)} (\epsilon, N))\cup \sigma (H^{(N/2)} (\epsilon, N))$ for any $\epsilon$, 
  \item  or
   $\lambda_2(H(\epsilon, N))$ belongs to   $\sigma (H^{(0)} (\epsilon, N))\cup \sigma (H^{(N/2)} (\epsilon, N))$   and not to  \break $ \cup_{0 < \ell < \frac  N2} \sigma (H^{(\ell)} (\epsilon, N)))$ for any $\epsilon$.
   \end{itemize}
   But, as we shall see below,  the analysis for $\epsilon>0$ small enough shows that we should be in the first case and the analysis for $\epsilon$ close to $\frac  \pi N$  that  we should be in the second case. Hence a contradiction.\\
   
   The analysis for $\epsilon >0$ very small is by perturbation a consequence of the analysis of $\epsilon=0$. Here we see from \eqref{specannulus} that $\lambda_2 (D_{R_1,R_2})$ is simple and belongs to $\sigma (H^{(0)} (0,N))$.\\
   \begin{remark}
   If we only have \eqref{ineqb}, we observe that the two first eigenvalues belong to $\sigma (H^{(0)} (0,N))$ and the argument is unchanged.
   \end{remark}
   The analysis for $\epsilon$ close to $\frac  \pi N$ is by perturbation a consequence of the analysis of $\epsilon =\frac  \pi N$. More details (which are not necessary for the argument) will be given in Section \ref{s7}. Here we see from \eqref{specdisc} 
   that $\lambda_2 (B_{R_2})$  has multiplicity two corresponding to $\sigma (H^{(1)} (\frac  \pi N,N))$.\\
   
   So we have proven that for this value of $\epsilon$ the multiplicity is at least three, hence equals three by the extension of Cheng's statement \cite{Ch}  proven in the previous section.
  \end{proof}
 
 \paragraph{Comparison with the former proof  proposed in \cite{HOHON2}}~\\
 When $N/2$ is even, we deduce from Remark \ref{rem3.1} that
 $$
 \sigma (H^{(odd)}) \subset \cup_{0 < \ell < \frac  N2} \sigma (H^{(\ell)} (\epsilon, N))) \mbox{ and } \sigma (H^{(1)} (\epsilon, N)) \subset  \sigma (H^{(odd)}) \,,
 $$
 with equality for $N=4$.\\
 From these two properties which imply that the eigenvalues in  $ \sigma (H^{(odd)}) $ have even multiplicity we can rewrite the previous proof in the way presented in \cite{HOHON2}:
 \begin{proposition}
 For $N\geq 4$ and $N/2$ even, there exists $\epsilon \in (0, \frac {\pi}{N})$ such that $\lambda_2(H(\epsilon,N))$ belongs to $\sigma (H^{even}(\epsilon,N))$  AND to $\sigma (H^{odd}(\epsilon,N))$.
  In  addition, the multiplicity of $\lambda_2$ for this value of $\epsilon$ is exactly $3$.
  \end{proposition}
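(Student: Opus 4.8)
The plan is to reduce the assertion entirely to the previous proposition (which already gives, for suitable $\epsilon$, that $\lambda_2(H(\epsilon,N))$ simultaneously lies in $\sigma(H^{(\ell)}(\epsilon,N))$ for some $0<\ell<\frac N2$ and in $\sigma(H^{(0)}(\epsilon,N))\cup\sigma(H^{(N/2)}(\epsilon,N))$) together with the dictionary between the fine decomposition by the $\mathcal A_\ell$'s and the coarse $even/odd$ decomposition recorded in Remark \ref{rem3.1}. So first I would invoke that proposition to fix a value of $\epsilon\in(0,\frac\pi N)$ for which $\lambda_2$ has multiplicity exactly $3$, with the eigenspace splitting as a $2$-dimensional piece coming from some $H^{(\ell)}$ with $0<\ell<\frac N2$ and a $1$-dimensional piece coming from $H^{(0)}$ or $H^{(N/2)}$.

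Next I would translate each of those two pieces into the $even/odd$ language, using the fact (Remark \ref{rem3.1}) that $\mathcal A_\ell\subset\mathcal H^{even}$ when $\ell$ is even and $\mathcal A_\ell\subset\mathcal H^{odd}$ when $\ell$ is odd. Under the hypothesis $N/2$ even: the index $\ell=0$ is even and the index $\ell=N/2$ is also even, so the $1$-dimensional piece always sits inside $\mathcal H^{even}$; hence $\lambda_2\in\sigma(H^{even}(\epsilon,N))$. For the $2$-dimensional piece attached to $H^{(\ell)}$ with $0<\ell<\frac N2$, I would use the containments displayed just before the proposition, namely that for $N/2$ even one has $\sigma(H^{(odd)})\subset\bigcup_{0<\ell<\frac N2}\sigma(H^{(\ell)})$ and $\sigma(H^{(1)})\subset\sigma(H^{(odd)})$; more precisely the point is that among the indices $\ell$ with $0<\ell<\frac N2$ there is at least one odd one (e.g. $\ell=1$, which is legitimate since $N\ge4$), and along the continuity/monotonicity flow in $\epsilon$ the first crossing that realises multiplicity $3$ is the one coming from the $\ell=1$ sector — this is exactly the content of the $\epsilon\to\frac\pi N$ analysis in the proof of the previous proposition, where $\lambda_2(B_{R_2})$ has multiplicity two in the $\ell=1$ sector. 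Thus the $2$-dimensional piece lies in $\mathcal H^{odd}$ and $\lambda_2\in\sigma(H^{odd}(\epsilon,N))$.

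Having placed a nonzero part of the second eigenspace in $\mathcal H^{even}$ and another nonzero part in $\mathcal H^{odd}$, the conclusion $\lambda_2(H(\epsilon,N))\in\sigma(H^{even}(\epsilon,N))\cap\sigma(H^{odd}(\epsilon,N))$ is immediate from the orthogonal decomposition $H(\epsilon,N)=H^{even}(\epsilon,N)\oplus H^{odd}(\epsilon,N)$. The statement about the multiplicity being exactly $3$ is inherited verbatim from the previous proposition (itself a consequence of the extension of Cheng's bound established in Section \ref{s3}), so nothing new has to be proved there.

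The only genuinely delicate point is making sure that the $2$-dimensional sector that crosses is an \emph{odd} one, i.e. that the relevant $\ell$ can be taken odd rather than merely lying strictly between $0$ and $N/2$; this is where the hypothesis \textquotedblleft$N/2$ even\textquotedblright\ and the explicit behaviour at $\epsilon=\frac\pi N$ (the doubly degenerate $\lambda_2(B_{R_2})$ sitting in the $\ell=1$ sector) are used, and for $N=4$ it is automatic because $\sigma(H^{(odd)})=\sigma(H^{(1)})$ there. Everything else is a bookkeeping translation between the two symmetry decompositions and carries no analytic difficulty.
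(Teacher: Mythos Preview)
Your reduction has a real gap for $N\ge 8$ (with $N/2$ even). You invoke the previous proposition to obtain some $\epsilon$ at which the second eigenspace splits as a one-dimensional piece in $H^{(0)}$ or $H^{(N/2)}$ and a two-dimensional piece in some $H^{(\ell)}$ with $0<\ell<\frac N2$. For the one-dimensional piece your translation to $\mathcal H^{even}$ is fine, since $0$ and $N/2$ are both even. But for the two-dimensional piece you need the integer $\ell$ to be \emph{odd}, and the previous proposition does not tell you this: its proof is a pure intermediate-value argument and never identifies which $\ell$ is involved. For $N=4$ this is harmless because $\ell=1$ is the only option, but already for $N=8$ one has $\ell\in\{1,2,3\}$ and nothing you have written rules out $\ell=2$; in that scenario both pieces would sit in $\mathcal H^{even}$ and your conclusion would fail. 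Your justification (``the first crossing\ldots\ is the one coming from the $\ell=1$ sector'') is an assertion, not a consequence of the $\epsilon\to\frac\pi N$ analysis, which only says that \emph{at} $\epsilon=\frac\pi N$ the second eigenvalue lies in the $\ell=1$ sector. To make your route rigorous you would need an extra monotonicity input of the type $\lambda_1(H^{(1)}(\epsilon,N))\le \lambda_1(H^{(\ell)}(\epsilon,N))$ for all $1\le\ell\le N/2$, which does hold (via the dihedral comparison argument of \cite{HHOHON}) but which you neither state nor use.

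The paper avoids this altogether by \emph{not} translating from the previous proposition but instead re-running the same contradiction argument directly in the even/odd decomposition: near $\epsilon=0$ one has $\lambda_2\in\sigma(H^{(0)})\subset\sigma(H^{even})$, near $\epsilon=\frac\pi N$ one has $\lambda_2\in\sigma(H^{(1)})\subset\sigma(H^{odd})$, and continuity forces an $\epsilon$ where $\lambda_2$ lies in both. The key observation is then purely structural: when $N/2$ is even, every odd integer $\ell$ lies strictly between $0$ and $N/2$, so $\mathcal H^{odd}\subset\bigoplus_{0<\ell<N/2}\mathcal A_\ell$ and hence every eigenvalue of $H^{odd}$ has even multiplicity. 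This gives the lower bound $3$ without ever needing to know which particular $\ell$ carries the odd piece; Cheng's bound finishes as before.
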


 For $N/2$ odd  integer some extra argument  is necessary to exclude that an eigenvalue in $\sigma (H^{(N/2)} (\epsilon, N))$ (which belongs to  $\sigma (H^{(odd)}) $)
  becomes a second eigenvalue. More precisely we should prove that $\lambda_1(H^{(N/2)} (\epsilon, N)) > \lambda_1(H^{(1)} (\epsilon, N))$ for any $\epsilon >0$. Here we have to use the additional dihedral invariance and use the arguments in \cite{HHOHON}.  The inequality follows from comparing the nodal sets of corresponding eigenfunctions (see (3.3) and (3.4) in \cite{HHOHON} 
  after having verified that the proof does not use the assumption that $\Omega$ is homeomorphic to a disk or an annulus).
   Hence, we have completed the proof sketched in \cite{HOHON2} but the new proof looks more natural.

 \section{Further discussion for the case $N=2$}\label{s6}
 In the previous sections, we have excluded the case $N=2$ because we were unable to prove that the eigenspaces of $\mathcal H^{(N/2)}$  have even dimension and there
  were no more spaces $\mathcal A_\ell$ with $0 < \ell < \frac  N2$ to play with. We now assume $N=2$ and consider $\mathfrak D (2,\epsilon)$. Note that this time it will be quite important to have not only the dihedral symmetry but also the property that the cracks are on a circle.\\
  As in \cite{HHOHON} (see (1.16) and (1.17) there), we will use the decomposition of $L^2$:
  $$
 \mathcal H:= L^2 (\mathfrak D(2,\epsilon))= \mathcal A_0 \oplus \mathcal A_1^{a} \oplus \mathcal A_1^{s}\,.
  $$
  
  Here
  $$
  \begin{array}{ll}
  \mathcal A_1^{s}&= \{ u \in \mathcal H\,,\, g u = -u\,,\, Tu =u\}\,,\\
   \mathcal A_1^{a}&= \{ u \in \mathcal H\,,\, g u = -u\,,\, Tu =- u\} \,,
   \end{array}
   $$
   where $T u (x_1,x_2)= u (x_1,-x_2)$.\\ We also observe that $g$ is for $N=2$ the inversion.\\
   We similarly define the operators $H^{(1,a)}(\epsilon,2)$ and $H^{(1,s)}(\epsilon,2)$.
   The question is then to compare the spectra of these two operators and more specifically the first eigenvalue.\\
   If we observe what is imposed by the symmetry or the antisymmetry with respect to $\{x_1=0\}$ or $\{x_2=0\}$ we can replace $\mathfrak D(2,\epsilon)$ by
  $$ \widehat {\mathfrak D}(2,\epsilon):= \mathfrak D(2,\epsilon)\cap \{x_1>0, x_2>0\}\,.$$ The problem corresponding to $H^{(1,s)}(\epsilon,2)$ is the problem where we assume on $\partial  \widehat {\mathfrak D}(2,\epsilon)\cap\{x_2=0\}$ the Neumann condition and on $\partial  \widehat {\mathfrak D}(2,\epsilon)\cap\{x_1=0\}$ the Dirichlet condition, keeping the Dirichlet condition on the other parts of the boundary.
  
  The problem corresponding to $H^{(1,a)}(\epsilon,2)$ is the problem where we assume on $\partial  \widehat {\mathfrak D}(2,\epsilon)\cap\{x_2=0\}$ the Dirichlet condition and on $\partial  \widehat {\mathfrak D}(2,\epsilon)\cap\{x_1=0\}$ the Neumann condition, keeping the Dirichlet condition on the other parts of the boundary.\\
  For $\epsilon =0$ and $\epsilon =\frac  \pi 2$ the two operators are isospectral. Hence the question is:\\
    Are the ground state energies of the two problems the same or are they different 
   for $\epsilon \neq 0, \frac  \pi 2$?
   \\
  We will show in Section \ref{s7} that for a given pair $(R_1,,R_2)$ with $R_1<R_2$ this can only be true,   in any closed subinterval of $(0,\frac \pi 2]$, for a finite  number  of different values of  $\epsilon$'s. Moreover,  for a specific natural pair $(R_1,R_2)$ we can give in Section~\ref{snum} the following numerically assisted answer:\\
  {\it  The ground state energies of $H^{(1,a)}(\epsilon,2)$ and $H^{(1,s)}(\epsilon,2)$ are equal if and only if $\epsilon=0$ or $\frac  \pi 2$.}

  \section{ Theoretical asymptotics in domains with cracks}\label{s7}
In this section, we analyze theoretically the behavior of the eigenvalue as $\epsilon$ tends to $\frac \pi 2$. This improves the general results based on \cite{Sto} and 
explains why we have  to modify the sketch of \cite{HOHON2} for the proof of Theorem \ref{Theorem2.1}.

 \subsection{Preliminaries}\label{ss7.1}

We now fix $N=2$ and consider $0<R_1<R_2$. Motivated by the previous question, we analyze the different spectral problems according to the symmetries. This leads us to consider on the quarter of a disk ($0< \theta< \frac \pi 2$)
 four different models. On the exterior circle and on the cracks, we always assume the Dirichlet condition and then, according to the boundary conditions retained for $\theta=0$ and $\theta=\pi/2$,  we consider four test cases~:
\begin{itemize}
\item Case NND (homogeneous Neumann boundary conditions for $\theta=0$ and $\theta=\pi/2$). 
\item Case DDD (homogeneous Dirichlet boundary conditions for $\theta=0$ and $\theta=\pi/2$). 
\item Case NDD (homogeneous Neumann boundary conditions for $\theta=0$ and homogeneous Dirichlet boundary conditions for $\theta=\pi/2$). 
\item Case DND (homogeneous Dirichlet boundary conditions for $\theta=0$ and homogeneous Neumann boundary conditions for $\theta=\pi/2$). 
\end{itemize}
This is immediately related to the problem on  the  cracked disk by using the symmetries with respect to the two axes. The symmetry properties lead either to Dirichlet or Neumann.

\subsection{ The cases NND and DND}\label{ss8.1}
We use the notation
$$
\begin{array}{ll}
B_{R_2}^+&:=B_{R_2}\cap\{x_2>0\}\,;\\
x_\pm&:=(0,\pm R_1)\,;\\
\delta&:=\frac\pi2-\epsilon\,;\\
K_\delta&:=\{x\in\mathbb R^2\,;\,r=R_1, \theta\in[-\pi/2-\delta,-\pi/2+\delta]\cap[\pi/2-\delta,\pi/2+\delta]\}\,;\\
K_\delta^+&:=K_\delta\cap\{x_2>0\}\,;\\
K_\delta^-&:=K_\delta\cap\{x_2<0\}\,.
\end{array}
$$
By the symmetry arguments of Section \ref{s6},
\begin{align*}
\lambda_1^{NND}( \widehat{\mathfrak D}(2,\epsilon))&=\lambda_1(B_{R_2}\setminus K_\delta);\\
\lambda_1^{DND}( \widehat{\mathfrak D}(2,\epsilon))&=\lambda_1(B_{R_2}^+\setminus K_\delta^+).
\end{align*}
The family of compact sets $(K_\delta)_{\delta>0}$ concentrates to the set $\{x_+,x_-\}$, in the sense that $K_\delta$ is contained in any open neighborhood of $\{x_+,x_-\}$ for $\delta$ small enough. Reference \cite{AFHL18} provides  two-term asymptotic expansions in this situation.

A direct application of Theorem 1.7 in \cite{AFHL18} gives 
\begin{equation*}
	\lambda_1(B_{R_2}^+\setminus K_\delta^+)=\lambda_1(B_{R_2}^+)+u(x_+)^2\frac{2\pi}{\left|\log(\mbox{diam}(K_\delta^+)\right|}+o\left(\frac1{\left|\log(\mbox{diam}(K_\delta^+)\right|}\right),
\end{equation*}
where $\mbox{diam}(K_\delta^+)$ is the diameter of $K_\delta^+$ and $u$ an eigenfunction associated with $\lambda_1(B_{R_2}^+)$, normalized in $L^2(B_{R_2}^+)$.  Using $ \mbox{diam}(K_\delta^+)=2R_1\sin(\delta)$ and the normalized eigenfunction given by Proposition 1.2.14 in \cite{Henrot06} we find, after simplification
\begin{multline}
\label{eqDND}
\lambda_1^{DND}(\widehat{\mathfrak D}(2,\epsilon))=j_{1,1}^2+\\
+ \frac8{R_2^2}\left(\frac{J_1(j_{1,1}R_1/R_2)}{J_1'(j_{1,1})}\right)^2\frac1{\left|\log(\pi/2-\epsilon)\right|}+o\left(\frac1{\left|\log(\pi/2-\epsilon)\right|}\right),
\end{multline}
 where $j_{\ell,k}$ is the $k$-th zero of the Bessel function $J_\ell$ corresponding to
 the integer $\ell\in \mathbb N$ (see Subsection \ref{ss7.2} for more details and numerical values).\\

We obtain a similar expansion for the other eigenvalue, starting from Theorem 1.4 in \cite{AFHL18}, which gives us
\begin{equation*}
	\lambda_1(B_{R_2}\setminus K_\delta)=\lambda_1(B_{R_2})+{\rm Cap}_{B_{R_2}}(K_\delta,u)+o\left({\rm Cap}_{B_{R_2}}(K_\delta,u)\right).
\end{equation*}
In this formula, $u$ is an eigenfunction associated with $\lambda_1(B_{R_2})$ and normalized in $L^2(B_{R_2})$, and ${\rm Cap}_{B_{R_2}}(K_\delta,u) $ is defined by Equation (6) in \cite{AFHL18}. Since $u$ is radially symmetric, $u(x_+)=u(x_-)$. We then observe that the proof of Proposition 1.5 in \cite{AFHL18} can be adapted to give 
\begin{equation*}
	{\rm Cap}_{B_{R_2}}(K_\delta,u)=u(x_\pm)^2{\rm Cap}_{B_{R_2}}(K_\delta) +o\left({\rm Cap}_{B_{R_2}}(K_\delta)\right),
\end{equation*}
where ${\rm Cap}_{B_{R_2}}(K_\delta)$ is the classical (condenser) capacity of $K_\delta$ relative to $B_{R_2}$. Since $K_\delta=K_\delta^+\cup K_\delta^-$, and since $K_\delta^+$ and $K_\delta^-$ concentrate to $x_+$ and $x_-$ respectively, we have
\begin{equation*}
	{\rm Cap}_{B_{R_2}}(K_\delta)\sim{\rm Cap}_{B_{R_2}}(K_\delta^+)+{\rm Cap}_{B_{R_2}}(K_\delta^-)
\end{equation*}
as $\delta\to 0$. This last fact seems to be well known  (see \cite{Fl}, page 178), but we give a proof in Appendix \ref{appCapAdd} for completeness. Finally, Proposition 1.6 in \cite{AFHL18} gives an asymptotic expansion for ${\rm Cap}_{B_{R_2}}(K_\delta^\pm)$. Gathering these estimates, we find

\begin{multline}
\label{eqNND}
\lambda_1^{NND}( \widehat{\mathfrak D}(2,\epsilon))=j_{0,1}^2+\\
+ \frac4{R_2^2}\left(\frac{J_0(j_{0,1}R_1/R_2)}{J_0'(j_{0,1})}\right)^2\frac1{\left|\log(\pi/2-\epsilon)\right|}+o\left(\frac1{\left|\log(\pi/2-\epsilon)\right|}\right).
\end{multline}

 \subsection{Analysis of the cases NDD and DDD}
 In these cases, the results in \cite{AFHL18} give  an estimate of the eigenvalue variation but no explicit first term for the expansion. However, they strongly suggest the form of this term, which we present as a conjecture in each case. By the symmetry arguments of Section \ref{s6}, we have, for $\epsilon$ close to $\pi/2$,
\begin{equation*}
	\lambda_1^{DDD}(\widehat{\mathfrak D}(2,\epsilon))=\lambda_2(B_{R_2}^+\setminus K_\delta^+).
\end{equation*}
We further note that $\lambda_2(B_{R_2}^+)$ ($=j_{2,1}^2$) is simple and that an associated eigenfunction $u$, normalized in $L^2(B_{R_2}^+)$, is given by 
\begin{equation*}
	u(r\cos \theta,r\sin \theta)=\frac2{\sqrt\pi}\frac1{R_2|J'_2(j_{2,1})|}J_2\left(\frac{j_{2,1}r}{R_2}\right)\sin(2\theta).
\end{equation*}
In particular, it follows that 
\begin{equation*}
	\partial_{x_1}u(x_+)= \frac1{R_1}\frac{4}{\sqrt\pi}\frac1{R_2|J'_2(j_{2,1})|}J_2\left(\frac{j_{2,1}R_1}{R_2}\right)\,,
\end{equation*}
so that,
\begin{equation*}
	u(x_++\rho(\cos t,\sin t))= \frac4{\sqrt\pi}\frac1{R_1 R_2|J'_2(j_{2,1})|}J_2\left(\frac{j_{2,1}R_1}{R_2}\right)\rho\cos t+\mathcal O \left(\rho^2\right).
\end{equation*}
Theorem 1.4 in \cite{AFHL18} gives us 
\begin{equation}
\label{eqDDDCap}
	\lambda_1(B_{R_2}^+\setminus K_\delta^+)=\lambda_1(B_{R_2}^+)+{\rm Cap}_{B_{R_2}^+}(K_\delta^+,u)+o\left({\rm Cap}_{B_{R_2}^+}(K_\delta^+,u)\right).
\end{equation}

\begin{proposition}\label{propDDD}
There exists $C>0$ such that
\begin{equation}
\label{eqDDDw}
j_{2,1}^2 \leq \lambda_1^{DDD}(\widehat{ \mathfrak D }(2,\epsilon))\leq j_{2,1}^2+ C \left(\left(\frac\pi2-\epsilon \right)^2\right).
\end{equation}
\end{proposition}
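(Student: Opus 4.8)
The plan is to establish the two inequalities in \eqref{eqDDDw} separately, the lower bound being immediate from domain monotonicity and the upper bound requiring a careful analysis of the capacity term ${\rm Cap}_{B_{R_2}^+}(K_\delta^+,u)$ appearing in \eqref{eqDDDCap}. For the lower bound, since $K_\delta^+$ is a compact set being removed from $B_{R_2}^+$, we have $B_{R_2}^+\setminus K_\delta^+\subset B_{R_2}^+$, so by the monotonicity of Dirichlet eigenvalues with respect to domain inclusion, $\lambda_1(B_{R_2}^+\setminus K_\delta^+)\geq \lambda_1(B_{R_2}^+)=j_{2,1}^2$. Combined with $\lambda_1^{DDD}(\widehat{\mathfrak D}(2,\epsilon))=\lambda_2(B_{R_2}^+\setminus K_\delta^+)$... wait, one must be careful: the identity stated is with $\lambda_2$ on the right, but \eqref{eqDDDCap} uses $\lambda_1$; I would first reconcile this (presumably $\lambda_1^{DDD}$ corresponds to the eigenvalue that converges to $j_{2,1}^2=\lambda_2(B_{R_2}^+)$, and \eqref{eqDDDCap} should be read with the appropriate indexing coming from \cite{AFHL18} applied to the second eigenvalue, which is simple). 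The lower bound then follows from the same monotonicity applied at the level of the relevant eigenvalue, since removing $K_\delta^+$ can only increase eigenvalues.

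For the upper bound, the key point is to bound ${\rm Cap}_{B_{R_2}^+}(K_\delta^+,u)$ from above by $C(\pi/2-\epsilon)^2=C\delta^2$. Here I would use the variational characterization of this weighted capacity as an infimum of a Dirichlet-type energy over functions that equal (a prescribed profile related to) $u$ on $K_\delta^+$ and vanish on $\partial B_{R_2}^+$; see Equation (6) in \cite{AFHL18}. The crucial feature distinguishing the DDD case from the NND and DND cases is that the eigenfunction $u$ \emph{vanishes} at the concentration point $x_+$ (since $u(r\cos\theta,r\sin\theta)$ is proportional to $\sin(2\theta)$, which vanishes on $\{x_2=0\}$, and $x_+=(0,R_1)$ lies on the axis $\{x_1=0\}$ where $\sin 2\theta = \sin\pi = 0$). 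Consequently, as recorded in the expansion for $u(x_++\rho(\cos t,\sin t))$, the values of $u$ on $K_\delta^+$ are of order $\rho\sim\delta$ rather than order $1$. Therefore, using a suitable test function — for instance, extending the linear profile $\rho\cos t$ cut off near $K_\delta^+$ — the capacity energy picks up an extra factor $\delta^2$ compared to the case of a non-vanishing eigenfunction, where one gets the $1/|\log\delta|$ behaviour seen in \eqref{eqDND} and \eqref{eqNND}. I would construct an explicit competitor: take $w$ supported in a fixed neighborhood of $x_+$, equal to the first-order Taylor polynomial of $u$ on $K_\delta^+$, and interpolating to $0$ on an annulus of fixed size; its Dirichlet energy relative to $B_{R_2}^+$ is then $O(\delta^2)$ (the gradient is $O(1)$ on a region of area $O(1)$, but the boundary data being $O(\delta)$ forces, after optimizing the radial decay, energy $O(\delta^2)$, or one simply uses that $w$ itself can be taken of size $O(\delta)$ with $O(1)$ gradient on a fixed-size region, giving energy $O(\delta^2)$).

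The main obstacle I anticipate is making rigorous the claim that the weighted capacity ${\rm Cap}_{B_{R_2}^+}(K_\delta^+,u)$ is controlled by $\delta^2$ rather than by $\delta^2/|\log\delta|$ or some intermediate rate, and in particular checking that the error term $o\left({\rm Cap}_{B_{R_2}^+}(K_\delta^+,u)\right)$ in \eqref{eqDDDCap} is genuinely negligible — that is, that the leading behaviour is indeed polynomial in $\delta$ and not logarithmically corrected, so that the $o(\cdot)$ does not secretly dominate. A clean way around this is to avoid the asymptotic formula \eqref{eqDDDCap} altogether for the upper bound and instead use directly the variational (min-max) characterization of $\lambda_2(B_{R_2}^+\setminus K_\delta^+)$: take the two-dimensional test space spanned by $\varphi_1$ (the first eigenfunction of $B_{R_2}^+$) and $u - w$, where $w$ is the cutoff competitor above; since $u-w$ vanishes on $K_\delta^+$ it is admissible for the cracked domain, and its Rayleigh quotient exceeds $j_{2,1}^2$ by at most $O(\|\nabla w\|^2 + \|w\|^2)=O(\delta^2)$. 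One must also verify the orthogonality/nondegeneracy of this test space and that $\varphi_1$ is likewise admissible (it may need its own small correction near $K_\delta^+$, contributing a further $O(\delta^2)$ or even $O(1/|\log\delta|)$ term, but since $\lambda_1(B_{R_2}^+)<j_{2,1}^2$ strictly, a perturbation of $\varphi_1$ keeps its Rayleigh quotient well below $j_{2,1}^2$ for $\delta$ small). This min-max argument yields $\lambda_2(B_{R_2}^+\setminus K_\delta^+)\leq j_{2,1}^2 + C\delta^2$, which is exactly the asserted upper bound.
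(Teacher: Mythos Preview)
Your proposal is essentially correct and isolates the same key mechanism as the paper: the lower bound is domain monotonicity, and the upper bound hinges on the fact that the eigenfunction $u$ vanishes at the concentration point $x_+$ (a regular nodal point), which forces the $u$-capacity of $K_\delta^+$ to be $O(\delta^2)$ rather than of logarithmic type. The paper's proof is, however, considerably shorter: for the upper bound it simply invokes Lemma~2.2 of \cite{AFHL18}, which already states that ${\rm Cap}_{B_{R_2}^+}(K_\delta^+,u)=O(\delta^2)$ whenever the concentration point lies on the nodal set of $u$, and then plugs this into \eqref{eqDDDCap}. Your explicit competitor construction is precisely the content of that lemma, so you are re-deriving it rather than citing it.

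Two minor remarks. First, your worry that the $o(\cdot)$ in \eqref{eqDDDCap} might ``secretly dominate'' is unfounded: if ${\rm Cap}=O(\delta^2)$ then ${\rm Cap}+o({\rm Cap})=O(\delta^2)$ automatically, so the asymptotic formula already yields the polynomial upper bound without further work; your alternative min--max route is valid but unnecessary. Second, you are right to flag the apparent index mismatch between $\lambda_1^{DDD}=\lambda_2(B_{R_2}^+\setminus K_\delta^+)$ and the $\lambda_1$ appearing in \eqref{eqDDDCap}: this is a typo in the paper, and \eqref{eqDDDCap} should be read with $\lambda_2$ on both sides (Theorem~1.4 of \cite{AFHL18} applies to any simple eigenvalue, and $\lambda_2(B_{R_2}^+)$ is simple).
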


\begin{proof} By monotonicity of the Dirichlet eigenvalues with respect to the domain, we immediately have
$$
j_{2,1}^2= \lambda_1^{DDD}(\widehat{ \mathfrak D }(2,\pi/2))\leq \lambda_1^{DDD}(\widehat{ \mathfrak D }(2,\epsilon)). 
$$
On the other hand, since $x_+$ is a (regular) point in the nodal set of $u$, we have ${\rm Cap}_{B_{R_2}^+}(K_\delta^+,u)=O\left(\delta^2\right)$ as $\delta\to0$, according to Lemma 2.2 in \cite{AFHL18}. The upper bound follows from Equation \eqref{eqDDDCap}.
\end{proof}

\begin{conjecture}\label{conjCap1}
As $\delta\to 0^+$, 
\begin{equation*}
	{\rm Cap}_{B_{R_2}^+}(K_\delta^+,u)\sim{\rm Cap}_{B_{R_2}^+}(s_\delta,u),
\end{equation*}
where $s_\delta$ is the segment $ [-\delta R_1,\delta R_1]\times\{R_1\}$.
\end{conjecture}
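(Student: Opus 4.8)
The plan is to transport the circular arc $K_\delta^+$ onto its tangent segment $s_\delta$ by a near-identity diffeomorphism of $B_{R_2}^+$ and to estimate the induced perturbation of the weighted capacity. Recall that the functional of Equation~(6) in \cite{AFHL18}, in the form already used in \eqref{eqDDDCap}, is the quadratic functional
\[
{\rm Cap}_{B_{R_2}^+}(K,u)=\min\Big\{\int_{B_{R_2}^+}|\nabla v|^2\,dx\ :\ v\in H^1_0(B_{R_2}^+),\ v=u\ \text{q.e. on }K\Big\},
\]
whose minimiser $v^*$ is the unique element of $\{v\in H^1_0(B_{R_2}^+):v=u\ \text{q.e. on }K\}$ orthogonal to $H^1_0(B_{R_2}^+\setminus K)$ for the Dirichlet inner product; hence $v^*$ depends linearly on $u|_K$ and $\psi\mapsto\sqrt{{\rm Cap}_{B_{R_2}^+}(K,\psi)}$ is a seminorm depending only on the q.e. values of $\psi$ on $K$. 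In the coordinates $(\sigma,\tau)=(x_1,x_2-R_1)$ centred at $x_+$, the arc $K_\delta^+$ is the graph $\tau=\sqrt{R_1^2-\sigma^2}-R_1=-\sigma^2/(2R_1)+O(\sigma^4)$ with $|\sigma|\le R_1\sin\delta$, while $s_\delta=\{\tau=0,\ |\sigma|\le R_1\delta\}$. The two sets are tangent at the interior point $x_+$, shrink to it at the linear rate $\sim R_1\delta$, and lie within Hausdorff distance $O(\delta^2)$ of each other.

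First I would construct, for $\delta$ small, a diffeomorphism $\Phi_\delta$ of $B_{R_2}^+$ supported in $B(x_+,2R_1\delta)$ (equal to the identity outside a shrinking neighbourhood, hence a genuine diffeomorphism of $B_{R_2}^+$ since $x_+$ is interior), with $\Phi_\delta(K_\delta^+)=s_\delta$, $\|\Phi_\delta-\mathrm{id}\|_{C^1}=O(\delta)$ and $\|\Phi_\delta-\mathrm{id}\|_{C^0}=O(\delta^2)$. Concretely $\Phi_\delta$ is the ``unbending'' map $(\sigma,\tau)\mapsto(\sigma,\tau-g(\sigma)\chi_\delta)$ with $g(\sigma)=\sqrt{R_1^2-\sigma^2}-R_1$ and $\chi_\delta$ a cut-off equal to $1$ on $B(x_+,R_1\delta)$ and supported in $B(x_+,2R_1\delta)$, composed with a mild reparametrisation of $\sigma$ by the factor $\delta/\sin\delta=1+O(\delta^2)$ (also cut off at scale $R_1\delta$) matching the arclength of $K_\delta^+$ to the length of $s_\delta$; the $C^1$ bound uses $g'(\sigma)=-\sigma/\sqrt{R_1^2-\sigma^2}=O(\delta)$ together with $g(\sigma)=O(R_1\delta^2)$ and $|\nabla\chi_\delta|=O((R_1\delta)^{-1})$ on the thin transition annulus. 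Changing variables $v\mapsto w=v\circ\Phi_\delta$ in the Dirichlet integral then identifies ${\rm Cap}_{B_{R_2}^+}(s_\delta,u)$ with the mixed-coefficient minimum $\min\{\int_{B_{R_2}^+}A_\delta\nabla w\cdot\nabla w\,dx:\ w=u\circ\Phi_\delta\ \text{q.e. on }K_\delta^+\}$, where $A_\delta=I+O(\delta)$ uniformly; by coercivity, replacing $A_\delta$ by $I$ changes the minimum by a factor $1+O(\delta)$, so everything is reduced to comparing ${\rm Cap}_{B_{R_2}^+}(K_\delta^+,u\circ\Phi_\delta)$ with ${\rm Cap}_{B_{R_2}^+}(K_\delta^+,u)$.

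The key point is the perturbation of the frozen datum from $\psi_1:=u|_{K_\delta^+}$ to $\psi_2:=(u\circ\Phi_\delta)|_{K_\delta^+}$. Since $x_+$ is a regular point of the nodal set of the eigenfunction $u$ of $\lambda_2(B_{R_2}^+)$ (as recorded in the proof of Proposition~\ref{propDDD}), the expansion of the excerpt gives $u(x_++\rho(\cos t,\sin t))=c_1\rho\cos t+\mathcal O(\rho^2)$ with $c_1\neq0$, i.e. the linear part of $u$ at $x_+$ is purely horizontal. Expanding $u$ along the arc $t\mapsto(-R_1\sin t,R_1\cos t)$ and along the corresponding point of $s_\delta$ ($|t|\le\delta$) then yields $\psi_1(t)=c_1R_1t+O(t^2)$ with $\psi_1'\approx c_1R_1\neq0$ and — crucially, because the $\mathcal O(\rho)$ part of $u$ has no $\tau$-component, so the $O(t^2)$ discrepancy cancels — $\psi_2(t)-\psi_1(t)=O(t^3)$. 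Hence $\psi_1$ is a bi-Lipschitz parametrisation of $K_\delta^+$, and $F:=(\psi_2-\psi_1)\circ\psi_1^{-1}$ extends to a Lipschitz function on $\rz$ with $F(0)=0$, $|F(r)|\le C\delta^2|r|$ and $|F'|\le C\delta^2$. If $W_1\in H^1_0(B_{R_2}^+)\cap L^\infty$ is the capacitary potential of $(K_\delta^+,\psi_1)$, then $F(W_1)\in H^1_0(B_{R_2}^+)$ equals $\psi_2-\psi_1$ q.e. on $K_\delta^+$, so the chain rule gives ${\rm Cap}_{B_{R_2}^+}(K_\delta^+,\psi_2-\psi_1)\le\int|F'(W_1)|^2|\nabla W_1|^2\le C^2\delta^4\,{\rm Cap}_{B_{R_2}^+}(K_\delta^+,\psi_1)$. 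The seminorm (triangle) inequality now yields $\big|\sqrt{{\rm Cap}_{B_{R_2}^+}(K_\delta^+,\psi_2)}-\sqrt{{\rm Cap}_{B_{R_2}^+}(K_\delta^+,\psi_1)}\big|\le\sqrt{{\rm Cap}_{B_{R_2}^+}(K_\delta^+,\psi_2-\psi_1)}\le C\delta^2\sqrt{{\rm Cap}_{B_{R_2}^+}(K_\delta^+,\psi_1)}$, whence ${\rm Cap}_{B_{R_2}^+}(K_\delta^+,u\circ\Phi_\delta)=(1+O(\delta^2)){\rm Cap}_{B_{R_2}^+}(K_\delta^+,u)$; combined with the $1+O(\delta)$ coefficient factor this gives ${\rm Cap}_{B_{R_2}^+}(s_\delta,u)=(1+O(\delta)){\rm Cap}_{B_{R_2}^+}(K_\delta^+,u)$, which is the asserted equivalence.

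The main subtlety — and the reason the argument is organised this way — is that everything must stay \emph{relative}: one never uses a two-sided bound ${\rm Cap}_{B_{R_2}^+}(K_\delta^+,u)\asymp\delta^2$ (Proposition~\ref{propDDD} gives only the upper bound $O(\delta^2)$), precisely because the competitor $F(W_1)$ bounds the error by a small multiple of ${\rm Cap}_{B_{R_2}^+}(K_\delta^+,u)$ itself. The points to check carefully are: (i) that the datum in Equation~(6) of \cite{AFHL18} is genuinely the eigenfunction frozen on the obstacle, so that the change of variables, the chain rule, and the seminorm inequality all apply verbatim; (ii) the quasi-everywhere/trace bookkeeping in ``$v=u$ on $K_\delta^+$'' under the bi-Lipschitz map $\Phi_\delta$ and under $v\mapsto F(v)$, which is standard in $H^1$ but should be stated; and (iii) the cancellation that upgrades $\psi_2-\psi_1$ from $O(t^2)$ to $O(t^3)$ — without it one still gets the conjecture but with a weaker rate, which is still enough for ``$\sim$''. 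A fully rigorous write-up must also make the $O(\delta^2)$ reparametrisation in $\Phi_\delta$ explicit so that $\Phi_\delta(K_\delta^+)$ is \emph{exactly} $s_\delta$; alternatively one first proves ${\rm Cap}_{B_{R_2}^+}(K_\delta^+,u)\sim{\rm Cap}_{B_{R_2}^+}(\tilde s_\delta,u)$ for the collinear segment $\tilde s_\delta$ of half-length $R_1\sin\delta$, then passes from $\tilde s_\delta$ to $s_\delta$ by monotonicity and rescaling, the two segments differing only by endpoint pieces of length $O(R_1\delta^3)$.
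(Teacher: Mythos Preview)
The paper does \emph{not} prove this statement: it is explicitly labelled a Conjecture and is used only hypothetically, to motivate the conjectural two-term expansion \eqref{eqDDD}. There is therefore no ``paper's proof'' to compare your attempt against.

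That said, your sketch is a plausible route to \emph{resolve} the conjecture rather than merely reprove it, and the architecture is sound. The three ingredients --- (a) a near-identity diffeomorphism $\Phi_\delta$ supported in a $\delta$-ball around $x_+$ with $\|\Phi_\delta-\mathrm{id}\|_{C^1}=O(\delta)$ sending the arc onto the tangent segment, (b) the linearity of the $u$-capacity minimiser in the frozen datum, giving the seminorm/triangle inequality, and (c) the Lipschitz-post-composition trick $F(W_1)$ with $\mathrm{Lip}(F)=O(\delta^2)$ to bound ${\rm Cap}(K_\delta^+,\psi_2-\psi_1)$ by $O(\delta^4)\,{\rm Cap}(K_\delta^+,\psi_1)$ --- fit together correctly and, crucially, keep every error \emph{multiplicative} in ${\rm Cap}(K_\delta^+,u)$, so you never need a lower bound on the latter. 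The cancellation $\psi_2-\psi_1=O(t^3)$ does hold: since $\partial_{x_2}u(x_+)=0$ (the eigenfunction $u$ has $\partial_r u|_{\theta=\pi/2}=0$ because of the factor $\sin 2\theta$), the vertical displacement $R_1(1-\cos t)=O(t^2)$ is multiplied by $\partial_{x_2}u=O(t)$.

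Two points deserve a line in a full write-up. First, the $L^\infty$ bound on $W_1$ is not actually needed: for globally Lipschitz $F$ with $F(0)=0$ one has $F(W_1)\in H^1_0$ and $|\nabla F(W_1)|\le \mathrm{Lip}(F)\,|\nabla W_1|$ a.e.\ regardless. Second, you should state once that the change of variables $v\mapsto v\circ\Phi_\delta$ preserves the q.e.\ constraint because bi-Lipschitz images of sets of zero $H^1$-capacity have zero capacity; this is standard but is the hinge on which ``$v=u$ q.e.\ on $s_\delta$'' becomes ``$w=u\circ\Phi_\delta$ q.e.\ on $K_\delta^+$''. With these recorded, and with the definition of ${\rm Cap}_\Omega(K,u)$ from \cite{AFHL18} confirmed to be the constrained Dirichlet minimum you describe, your argument would upgrade Conjecture~\ref{conjCap1} to a proposition.
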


The first term of the asymptotics of ${\rm Cap}_{B_{R_2}^+}(s_\delta,u)$ is given by Theorem 1.13 in \cite{AFHL18},
 so that Conjecture \ref{conjCap1} would imply that as $\epsilon \to\pi/2^-$,
\begin{multline}
\label{eqDDD}
\lambda_1^{DDD}(\widehat{ \mathfrak D }(2,\epsilon))=j_{2,1}^2+\\
+ \frac{16}{R_2^2}\left(\frac{J_2(j_{2,1}R_1/R_2)}{J_2'(j_{2,1})}\right)^2\left(\frac\pi2-\epsilon \right)^2+o\left(\left(\frac\pi2-\epsilon \right)^2\right).
\end{multline}
Using again Section \ref{s6}, we have
\begin{equation*}
	\lambda_1^{NDD}(\widehat{ \mathfrak D }(2,\epsilon))=\lambda_2(B_{R_2}\setminus K_\delta)=\mu_2(B_{R_2}^+\setminus K_\delta^+),
\end{equation*}
where $(\mu_j)_{j\ge1}$ denotes the eigenvalues of the Laplacian in $B_{R_2}^+$, with a Dirichlet condition on the semi-circle $\{|x|=R_2\}\cap{x_2>0}$ and a Neumann condition on the diameter $[-R_2,R_2]\times\{0\}$. We note that $\mu_2(B_{R_2}^+)$ is simple. The eigenvalue $\lambda_2(B_{R_2})$ is double when the Laplacian is understood as acting on $L^2(B_{R_2})$, but simple if we restrict the Laplacian to   $\mathcal A_1^{s}$  (as defined in Section \ref{s6}).
We denote by $(\lambda_j^s)_{j\ge1}$ the spectrum of  $H^{(1,s)}(\epsilon,2)$ (by a slight abuse of notation we do not specify the value of $\epsilon$). We remark that 
\[\mu_2(B_{R_2}^+)=\lambda_2(B_{R_2})=  \lambda_1^s (B_{R_2}).\]
An eigenfunction associated with $\lambda_1^s(B_{R_2})$ and normalized in $\mathcal A_1^s$ is given by 
\begin{equation*}
	u(r\cos \theta,r\sin \theta)=\sqrt{\frac2{\pi}}\frac1{R_2|J'_1(j_{1,1})|}J_1\left(\frac{j_{1,1}r}{R_2}\right)\cos \theta\,.
\end{equation*}
An eigenfunction associated with $\mu_2(B_{R_2}^+)$ and normalized in $L^2(B_{R_2}^+)$ is given by $\sqrt2 u_{|B_{R_2}^+}$.

\begin{proposition}\label{propNDD}
There exists $C>0$ such that
\begin{equation}
	\label{eqNDDw}
j_{1,1}^2 \leq \lambda_1^{NDD}(\widehat{ \mathfrak D }(2,\epsilon))\leq j_{1,1}^2+ C \left(\frac\pi2-\epsilon \right)^2 .
\end{equation}
\end{proposition}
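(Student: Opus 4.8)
The plan is to mirror the proof of Proposition \ref{propDDD}, since the two statements have exactly the same structure: a lower bound by domain monotonicity and an upper bound via the capacity estimate from \cite{AFHL18}. First I would establish the lower bound. By the identification $\lambda_1^{NDD}(\widehat{\mathfrak D}(2,\epsilon))=\mu_2(B_{R_2}^+\setminus K_\delta^+)$ and the fact that $\mu_2$ is monotone nonincreasing under enlargement of the domain (removing the slit $K_\delta^+$ shrinks the form domain), we get $\mu_2(B_{R_2}^+)\leq\mu_2(B_{R_2}^+\setminus K_\delta^+)$, i.e.
$$
j_{1,1}^2=\mu_2(B_{R_2}^+)=\lambda_1^s(B_{R_2})\leq \lambda_1^{NDD}(\widehat{\mathfrak D}(2,\epsilon)),
$$
using the observed equality $\mu_2(B_{R_2}^+)=\lambda_2(B_{R_2})=\lambda_1^s(B_{R_2})=j_{1,1}^2$. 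Equivalently this is just $\lambda_1^{NDD}(\widehat{\mathfrak D}(2,\pi/2))\leq \lambda_1^{NDD}(\widehat{\mathfrak D}(2,\epsilon))$.

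For the upper bound I would invoke Theorem 1.4 of \cite{AFHL18}, applied to the second eigenvalue $\lambda_2(B_{R_2}^+\setminus K_\delta^+)$ with the normalized eigenfunction $\sqrt2\,u_{|B_{R_2}^+}$ associated with $\mu_2(B_{R_2}^+)$; this yields
$$
\lambda_1^{NDD}(\widehat{\mathfrak D}(2,\epsilon))=\mu_2(B_{R_2}^+)+{\rm Cap}_{B_{R_2}^+}(K_\delta^+,\sqrt2\,u)+o\!\left({\rm Cap}_{B_{R_2}^+}(K_\delta^+,\sqrt2\,u)\right).
$$
The crucial observation is that $x_+=(0,R_1)$ lies on the nodal set of $u(r\cos\theta,r\sin\theta)=c\,J_1(j_{1,1}r/R_2)\cos\theta$ — indeed $\cos\theta$ vanishes at $\theta=\pi/2$ — and it is a regular (non-critical) point of $u$ there, since $\partial_{\theta}(\cos\theta)=-1\neq 0$ at $\theta=\pi/2$ and $J_1(j_{1,1}R_1/R_2)\neq 0$ for $0<R_1<R_2$. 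Hence Lemma 2.2 of \cite{AFHL18} applies and gives ${\rm Cap}_{B_{R_2}^+}(K_\delta^+,u)=O(\delta^2)$ as $\delta\to 0$, exactly as in the DDD case. Substituting $\delta=\pi/2-\epsilon$ into the expansion above then produces the claimed bound
$$
\lambda_1^{NDD}(\widehat{\mathfrak D}(2,\epsilon))\leq j_{1,1}^2+C\left(\tfrac\pi2-\epsilon\right)^2
$$
for some $C>0$ and $\epsilon$ close to $\pi/2$.

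The only genuinely delicate point is the regularity of $x_+$ as a point of the nodal set of the relevant eigenfunction: one must check that the eigenfunction associated with $\mu_2(B_{R_2}^+)$ (with Neumann condition on the diameter) is precisely the restriction of the $\mathcal A_1^s$-ground state of $B_{R_2}$, whose nodal line is the segment $\{x_1=0\}\cap B_{R_2}$, and that this nodal line passes through $x_+$ transversally while $u$ does not vanish radially there. Everything else is a verbatim transcription of the argument for Proposition \ref{propDDD}, with $j_{2,1}$ replaced by $j_{1,1}$ and $B_{R_2}^+$ carrying a Neumann rather than Dirichlet condition on the diameter. I would also note, for the reader, that by the same reasoning one expects the refined expansion with leading coefficient $\tfrac{16}{R_2^2}\bigl(J_1(j_{1,1}R_1/R_2)/J_1'(j_{1,1})\bigr)^2$, conditional on the analogue of Conjecture \ref{conjCap1}, but this is not needed for the inequality just proved.
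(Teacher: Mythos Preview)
Your lower bound by domain monotonicity is fine and matches the paper. For the upper bound, however, your route diverges from the paper's in a way that leaves a genuine gap.

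You work on the half-disk $B_{R_2}^+$ with the mixed problem (Neumann on the diameter, Dirichlet on the arc) and invoke Theorem~1.4 of \cite{AFHL18} for $\mu_2$. But Theorem~1.4 in \cite{AFHL18} is stated and proved for the Dirichlet Laplacian; it does not cover mixed boundary conditions. The paper is explicit about this: a few lines after the proof, when discussing the conjectural two-term expansion \eqref{eqNDD}, it writes ``Let us also assume that Theorem 1.4 in \cite{AFHL18} holds for the eigenvalue problems with mixed boundary conditions which define $(\mu_j)_{j\ge1}$''---that is, this extension is treated as an extra hypothesis, not a known fact, and is deliberately \emph{not} used in the proof of the proposition itself.

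The paper avoids the issue by using the other identification $\lambda_1^{NDD}(\widehat{\mathfrak D}(2,\epsilon))=\lambda_2(B_{R_2}\setminus K_\delta)$ on the \emph{full} disk, where the boundary condition is purely Dirichlet. The price is that $\lambda_2(B_{R_2})$ is double, so Theorem~1.4 again does not apply verbatim; instead the paper repeats the argument of Appendix~A of \cite{AFHL18} for the Laplacian restricted to the symmetry space $\mathcal A_1^s$, where the eigenvalue becomes simple. One then also has to adapt Lemma~2.2 of \cite{AFHL18} to the set $K_\delta$, which concentrates to \emph{two} points $x_+$ and $x_-$ rather than one, to get ${\rm Cap}_{B_{R_2}}(K_\delta,u)=O(\delta^2)$.

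So your strategy is not wrong in spirit---working on the half-disk makes the eigenvalue simple and reduces $K_\delta$ to a single arc---but it trades the multiplicity obstruction for a boundary-condition obstruction that \cite{AFHL18} does not address, and you have not justified that step. A minor side remark: the refined leading coefficient you predict at the end should be $4/R_2^2$, not $16/R_2^2$ (compare \eqref{eqNDD}); you have carried over the constant from the DDD case.
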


\begin{proof} We use $\lambda_1^{NDD}(\widehat{ \mathfrak D }(2,\epsilon))=\lambda_2(B_{R_2}\setminus K_\delta)$. To find an upper bound, we would like to apply Theorem 1.4 in \cite{AFHL18} to get 
$$
	\lambda_2(B_{R_2}\setminus K_\delta)=\lambda_2(B_{R_2})+{\rm Cap}_{B_{R_2}}(K_\delta,u)+o\left({\rm Cap}_{B_{R_2}}(K_\delta,u)\right).
$$
Unfortunately, this cannot be done directly, since $\lambda_2(B_{R_2})$ is a double eigenvalue. But the result is easily obtained by repeating the proof in Appendix A of \cite{AFHL18} for the Laplacian acting on the 
 symmetric space $\mathcal A_1^s$. A straightforward adaptation of the proof of Lemma 2.2 in \cite{AFHL18}, taking into account the fact that $K_\delta$ concentrates to two points, gives ${\rm Cap}_{B_{R_2}}(K_\delta,u)=O\left(\delta^2\right)$ as $\delta\to0$.
\end{proof}

Let us assume again that Conjecture \ref{conjCap1} holds. Let us also assume that Theorem 1.4 in \cite{AFHL18} holds for the eigenvalue problems with mixed boundary  conditions which define $(\mu_j)_{j\ge1}$. We obtain, as $\delta\to0^+$,
\begin{equation*}
	\mu_2(B_{R_2}^+\setminus K_\delta^+)=\mu_2(B_{R_2}^+)+2\pi|\partial_{x_1}u(x_+)|^2R_1^2\delta^2+o\left(\delta^2\right).
\end{equation*}
Alternatively, we could work  in the 
symmetric space $\mathcal A_1^s$. Repeating the proof in Appendix A of \cite{AFHL18} in this space and assuming that the $u$-capacity defined in \cite{AFHL18} is asymptotically additive for small distant sets, we obtain
\begin{equation*}
	\lambda_1^s(B_{R_2}\setminus K_\delta)= \lambda_1^s(B_{R_2})+\pi|\partial_{x_1}u(x_+)|^2R_1^2\delta^2+\pi|\partial_{x_1}u(x_-)|^2R_1^2\delta^2+o\left(\delta^2\right).
\end{equation*}
Both methods would give
\begin{multline}
	\label{eqNDD}
\lambda_1^{NDD}(\widehat{ \mathfrak D }(2,\epsilon))=j_{1,1}^2+\\
+ \frac{4}{R_2^2} \left(\frac{J_1(j_{1,1}R_1/R_2)}{J_1'(j_{1,1})}\right)^2\left(\frac\pi2-\epsilon \right)^2+o\left(\left(\frac\pi2-\epsilon \right)^2\right).
\end{multline}
\subsection{Comparison}

As a consequence of Propositions  \ref{propDDD} and \ref{propNDD} and using also the analyticity with respect to $\epsilon$, we obtain
\begin{proposition}\label{prop7.4}
There exists $\epsilon_0 \in (0,\frac \pi 2)$ such that for $\epsilon \in [\epsilon_0,\frac \pi 2)$ we have
$$
\lambda_1^{NDD}(\widehat{ \mathfrak D }(2,\epsilon)) < 
\lambda_1^{DND}(\widehat{ \mathfrak D }(2,\epsilon)) \,.
$$
Moreover $\delta(\epsilon):=\lambda_1^{NDD}(\widehat{ \mathfrak D }(2,\epsilon)) -
\lambda_1^{DND}(\widehat{ \mathfrak D }(2,\epsilon))$ can at most  vanish  in $(0, \frac \pi 2)$    on a sequence of $\epsilon$'s with no accumulation point except possibly at $0$.
\end{proposition}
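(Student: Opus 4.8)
The plan is to treat the two assertions separately. The strict inequality for $\epsilon$ near $\frac\pi2$ is a direct consequence of the two–term expansions obtained in Subsection~\ref{ss8.1}, while the statement about the zeros of $\delta$ rests on the real-analyticity of the two eigenvalue branches in $\epsilon$ on $(0,\frac\pi2)$.

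For the inequality I would compare \eqref{eqDND} with Proposition~\ref{propNDD}. By \eqref{eqDND}, as $\epsilon\to(\frac\pi2)^-$ the quantity $\lambda_1^{DND}(\widehat{\mathfrak D}(2,\epsilon))-j_{1,1}^2$ is equivalent to $\frac{8}{R_2^2}\big(J_1(j_{1,1}R_1/R_2)/J_1'(j_{1,1})\big)^2\,|\log(\frac\pi2-\epsilon)|^{-1}$; the coefficient is strictly positive, since $J_1'(j_{1,1})\neq0$ ($j_{1,1}$ being a simple zero of $J_1$) and $J_1(j_{1,1}R_1/R_2)\neq0$ (because $0<j_{1,1}R_1/R_2<j_{1,1}$ and $j_{1,1}$ is the first positive zero of $J_1$). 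On the other hand, Proposition~\ref{propNDD} gives $0\le\lambda_1^{NDD}(\widehat{\mathfrak D}(2,\epsilon))-j_{1,1}^2\le C(\frac\pi2-\epsilon)^2$, and since $(\frac\pi2-\epsilon)^2|\log(\frac\pi2-\epsilon)|\to0$ this is $o\big(|\log(\frac\pi2-\epsilon)|^{-1}\big)$. Comparing the two estimates produces $\epsilon_0\in(0,\frac\pi2)$ with $\lambda_1^{NDD}(\widehat{\mathfrak D}(2,\epsilon))<\lambda_1^{DND}(\widehat{\mathfrak D}(2,\epsilon))$ for all $\epsilon\in[\epsilon_0,\frac\pi2)$.

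For the second assertion I would first establish that $\epsilon\mapsto\lambda_1^{NDD}(\widehat{\mathfrak D}(2,\epsilon))$ and $\epsilon\mapsto\lambda_1^{DND}(\widehat{\mathfrak D}(2,\epsilon))$ are real-analytic on $(0,\frac\pi2)$. Each of these is the bottom of the spectrum of the Laplacian on the slit domain $\widehat{\mathfrak D}(2,\epsilon)$ with the corresponding mixed (Dirichlet/Neumann) boundary conditions, hence a \emph{simple} isolated eigenvalue: replacing an eigenfunction $u$ by $|u|$ (which stays in the form domain, with the same Rayleigh quotient) one may take it $\ge0$, hence $>0$ in the interior by the strong maximum principle, and simplicity follows in the usual way. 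Then, fixing $\epsilon_0\in(0,\frac\pi2)$, I would transport all domains $\widehat{\mathfrak D}(2,\epsilon)$ with $\epsilon$ near $\epsilon_0$ onto the fixed domain $\widehat{\mathfrak D}(2,\epsilon_0)$ by bi-Lipschitz homeomorphisms $\Phi_\epsilon$, with $\Phi_{\epsilon_0}=\mathrm{Id}$, mapping each boundary component (exterior arc, crack, the two straight segments) onto the corresponding one and depending analytically on $\epsilon$ in the sense that $D\Phi_\epsilon$ is an $L^\infty$-valued analytic function of $\epsilon$; concretely, in polar coordinates $\Phi_\epsilon$ may fix $r$ and reparametrize $\theta$ by a piecewise-affine map carrying the crack tip $\theta=\epsilon_0$ to $\theta=\epsilon$, localized by a cutoff in $r$ around $r=R_1$. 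Pulling the Dirichlet form back by $\Phi_\epsilon$ yields a family of uniformly elliptic forms $v\mapsto\int_{\widehat{\mathfrak D}(2,\epsilon_0)}A_\epsilon\nabla v\cdot\nabla v$ with $A_\epsilon$ analytic in $\epsilon$, all on the \emph{same} form domain (the constraint of vanishing on the Dirichlet part survives because $\Phi_\epsilon$ respects the boundary decomposition); this is a holomorphic family of sesquilinear forms of type~(a), the associated self-adjoint operators form a holomorphic family of type~(B), and by Kato's theory a simple isolated eigenvalue of such a family is holomorphic in the parameter, hence real-analytic for real $\epsilon$. As $\epsilon_0$ is arbitrary, real-analyticity on all of $(0,\frac\pi2)$ follows.

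Finally, $\delta=\lambda_1^{NDD}-\lambda_1^{DND}$ is then real-analytic on $(0,\frac\pi2)$ and, by the first step, $<0$ on $[\epsilon_0,\frac\pi2)$, so $\delta\not\equiv0$; a real-analytic function on an interval that is not identically zero has only isolated zeros, so the zero set of $\delta$ is discrete in $(0,\frac\pi2)$ and, as a subset of $\mathbb R$, can accumulate only at $0$ or at $\frac\pi2$ — and accumulation at $\frac\pi2$ is ruled out by $\delta<0$ near $\frac\pi2$. Hence $\delta$ vanishes at most on a sequence with no accumulation point other than possibly $0$. I expect the analyticity step to be the real difficulty: the substantive point is to realize the family of cracked domains $\widehat{\mathfrak D}(2,\epsilon)$ as an analytic family of operators; once everything has been pulled back to a fixed domain by an analytic family of (merely Lipschitz) changes of variables, the analyticity of the simple ground-state eigenvalue is routine Kato--Rellich theory, and the crack causes no trouble since the change of variables only has to move the crack tip and can be kept away from the outer boundary and from the origin.
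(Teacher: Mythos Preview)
Your proof is correct and follows the same line as the paper: the strict inequality near $\frac\pi2$ comes from comparing the logarithmic two-term expansion \eqref{eqDND} for $\lambda_1^{DND}$ with the $O((\frac\pi2-\epsilon)^2)$ bound of Proposition~\ref{propNDD} for $\lambda_1^{NDD}$, and the discreteness of the zero set follows from real-analyticity of the two (simple) ground-state branches together with $\delta\not\equiv0$. The paper's own argument is just the one-line remark ``as a consequence of Propositions~\ref{propDDD} and~\ref{propNDD} and using also the analyticity with respect to $\epsilon$''; you have supplied the details the paper leaves implicit, in particular the pull-back/Kato mechanism for analyticity (which the paper invokes elsewhere via \cite{DH}) and the verification that the leading coefficient in \eqref{eqDND} is strictly positive.
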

 A more accurate analysis as $\epsilon \rightarrow 0$ would be useful for excluding the possibility of a sequence of zeros of $\delta$ tending to $0$.
We will see in the next section that numerics strongly suggests that  $\delta(\epsilon)$  is negative in  $(0, \frac \pi 2)$.
The argument used in Proposition \ref{prop7.4} is general and not related to $N=2$.

 \section{Some illustrating numerics}\label{snum}
  \subsection{Preliminaries}\label{ss9.1}
In this section, we complete the theoretical study of the previous section by using numerics. 
 With the discussion around \eqref{specannulusa} in mind, a  particular choice for the pair $(R_1,R_2)$ is  to  start from $B_{R_2}$ with $R_2=1$, and then to take as $0 < R_1< 1$ the radius of the circle on which the second radial eigenfunction (which is associated with the sixth eigenvalue)  vanishes. 
 In this case, we have 
$ \lambda_1(B_{R_1}) = \lambda_1 ( M_{R_1,R_2} )$ and $\lambda_1(B_{R_1})=\lambda_6(B_{R_2})$ is an eigenvalue of the Dirichlet Laplacian in $ \mathfrak D(N,\epsilon) $ for any $\epsilon \in [0,\frac  \pi N]$. Its labelling as eigenvalue is $2$ for 
$0 < \epsilon \leq \epsilon_0$ and becomes $6$ for $\epsilon$ sufficienly close $\frac  \pi N$. In addition, there is a unique $\epsilon_0^*$ such that the labelling is $2$ for $0 < \epsilon \leq \epsilon_0^*$  and becomes $>2$ for $\epsilon >\epsilon_0^*$. This follows from the piecewise analyticity of the eigenvalues (Kato's theory)  and a more detailed analysis  as $\epsilon \rightarrow 0$ or $\epsilon \rightarrow \frac  \pi N$ (see \cite{DH} for the technical details,   \cite{CP,HJ} for related questions and our previous  section).
  The two next subsections recall what will be used for a theoretical verification of our numerical approach in the limits $\epsilon \rightarrow 0$ and $\epsilon \rightarrow +\frac \pi 2$. We have indeed in these cases enough theoretical
  information  for controlling the method.
  
 \subsection{Reminder: the case of the disk (Dirichlet)}\label{ss7.2}
  Let $j_{\ell,k}$ be the $k$-th zero of the Bessel function $J_\ell$ corresponding to
 the integer $\ell\in \mathbb N$. Here is a list of approximate values
 after the celebrated handbook of \cite{AS}, p. 409, we keep only the first values including at least all the eigenvalues which are less than approximately $13$.

\begin{equation}
\begin{array}{rcccccccccccc}
&\ell=&0 &1 &2 & 3 & 4 & 5 & 6&7&8\\
k=
 1 && 2.404& 3.831 & 5.135 &6.380&7.588& 8.771&9.936&11.086&12.225&\\
2& & 5. 520 & 7.015 & 8. 417 & 9.761 & 11. 064 & 12.338& 13.589& 14.821& 16.037&\\
3& &8,653&10.173&11.619 & 13.015& 14.372& 15.700& 17.003& 18.287& 19.554&\\
4& & 11.791&13.323& 14.796& 16.223& 17.616& 18.980& 20.320& 21.641& 22.942&
  \end{array}
\end{equation}

This leads to the following ordering of the zeros :
\begin{equation}
\begin{array}{l}
j_{0,1}< j_{1,1} <
 j_{2,1} < j_{0,2}< j_{3,1}<j_{1,2}<j_{4,1}<j_{2,2}<j_{0,3}<\dots\\
\qquad \dots  < j_{5,1} < j_{3,2} < j_{6,1} < j_{1,3} < j_{7,1} < j_{2,3}
 < j_{0,4} < j_{8,1}\;.
\end{array}
\end{equation}
The corresponding eigenvalues for the disk $B_1$ of radius $1$, with Dirichlet condition 
are given by $j_{\ell, k}^2$. The multiplicity is $1$ if $\ell=0$ (radial case)  and $2$ if $\ell\neq 0$.\\
Hence we get for the six first eigenvalues (ordered in increasing order):
\begin{equation}\label{eq:7.3}
\begin{array}{l}
\lambda_1^D(B_1)\sim 5.78  \\
\lambda_2^D(B_1)= \lambda_3^D(B_1)\sim  14.68 \\
\lambda_4^D(B_1)= \lambda_5^D(B_1) \sim 26.37 \\
\lambda_6^D(B_1)\sim  30.47\,.
\end{array}
\end{equation}
With our choice of the pair $(R_1,R_2)$ in the previous subsection,  we have 
\begin{equation}\label{defR1}
R_1 = j_{0,1}/j_{0,2} \sim 0.4356\,.
\end{equation}
\subsection{The case of the annulus with $R_1=0.4356$}\label{ss7.3}
We only keep the eigenvalues which are smaller than $150$. Note that the multiplicity is $2$ as soon as $\ell >0$. The precision is relatively good but of no use due to the fact that we also use some approximation of the right $R_1$ defined in \eqref{defR1}.

\begin{equation}
\begin{array}{rccccccccccccc}
&\ell=&0 &1 &2 & 3 & 4 & 5 & 6&7&8\\
k= 1 && 30.46&  32.53 &  38.68 &  48.78&   62.61&  79.91&  100.39&    123.79&  149.90&\dots

 \\
2 &&  123.38& 125.60&  132.29 &    143.45& \dots 
\\
3 && \dots
 
\end{array}
\end{equation}

 \subsection{The case of the quarter of a disk (NND) with D-cracks}\label{ss7.4}

 When $\epsilon=0$, the two first eigenvalues of this problem with Neumann on the two radii ($\theta=0,\frac  \pi 2$) are equal (due to our choice of $R_1$) 
 and correspond to $\lambda_6^D(B_1)\sim 30.47$.\\

 The third eigenvalue is either the fourth Dirichlet  eigenvalue in $B_{R_1}$ or the second $NND$ eigenvalue in the annulus. By dilation, this would be  in the first case 
  $$ (j_{0,2} /j_{0,1})^2  \lambda_2^{NND}(B_1)= (j_{0,2} /j_{0,1})^2  \lambda_4^{D}(B_1) \sim  139.05\,.$$
  We are actually in the second case, the next eigenvalues for the (NND) problem corresponding for $\epsilon=0$ to the pairs $(k,\ell)$ for the annulus: $(1,2)$, $(1,4)$, $(1,6)$ and $(2,0)$.\\

  \begin{figure}[!ht]
 \begin{center}
\includegraphics[width=12cm]{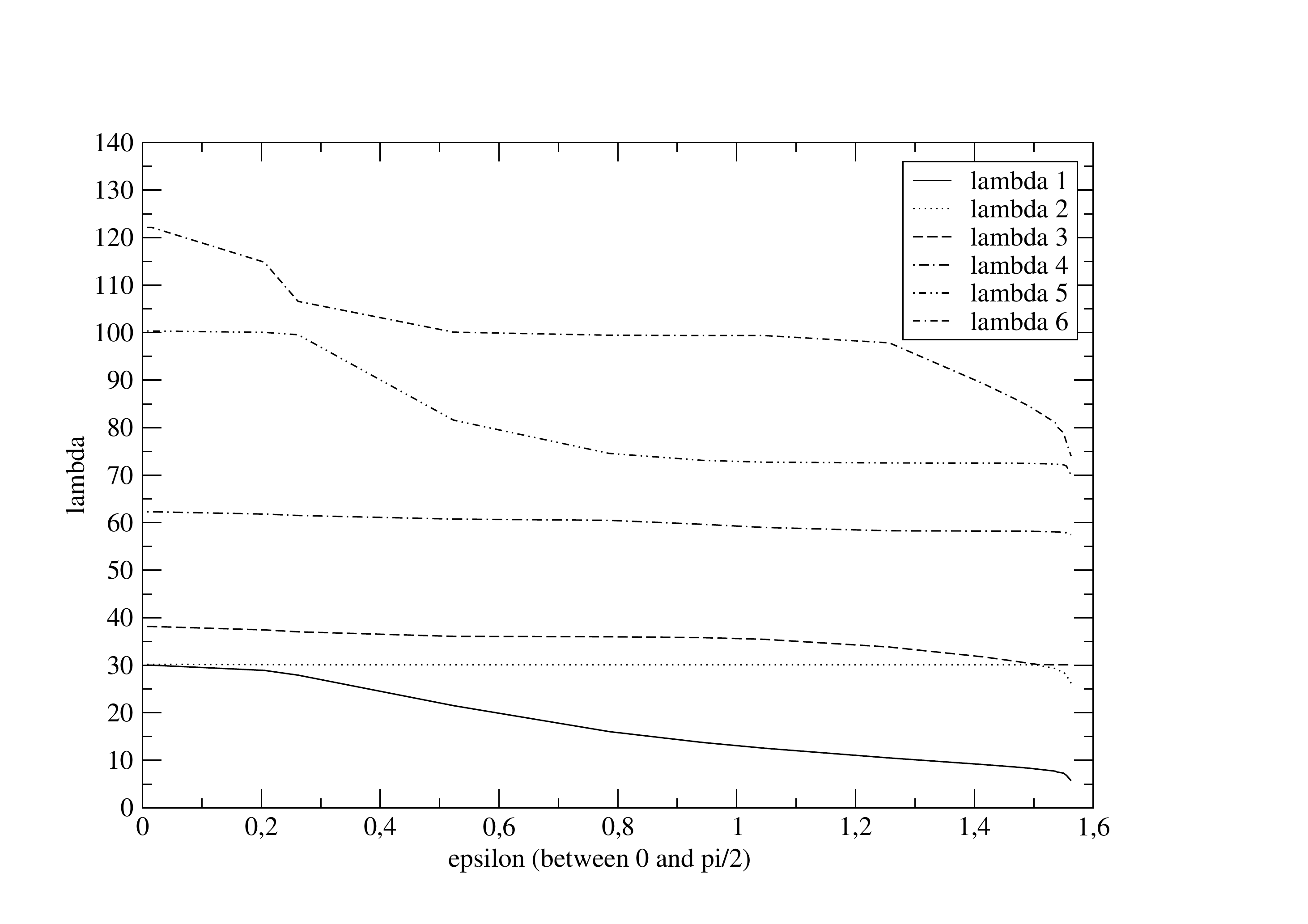}
 \caption{Case Neumann-Neumann: six first eigenvalues ($R_1=0.4356$)} 
  \end{center}
 \end{figure}

 When $\epsilon =\frac  \pi 2$, we should recover the eigenvalues of the Dirichlet problem in $B_1$ which have the right symmetry. We get:
 \begin{equation}
\begin{array}{l}
\lambda_1^{NND}(\widehat {\mathfrak D}(2,\frac  \pi 2))= \lambda_1^D( \mathfrak D(2,\frac  \pi 2))\sim 5.76  \\
\lambda_2^{NND}(\widehat {\mathfrak D}(2,\frac  \pi 2))= \lambda_4^D(\mathfrak D(2,\frac  \pi 2)) \sim 26.42 \\
\lambda_3^{NND} (\widehat {\mathfrak D}(2,\frac  \pi 2))= \lambda_6^D(\mathfrak D (2,\frac  \pi 2)) \sim 30.47\,.
\end{array}
\end{equation}
  
We recover the result predicted by our  two terms asymptotics in \eqref{eqNND}.

     \subsection{The case of the quarter of a disk (DDD) with D-cracks}

  \begin{figure}[!ht]
 \begin{center}
\includegraphics[width=12cm]{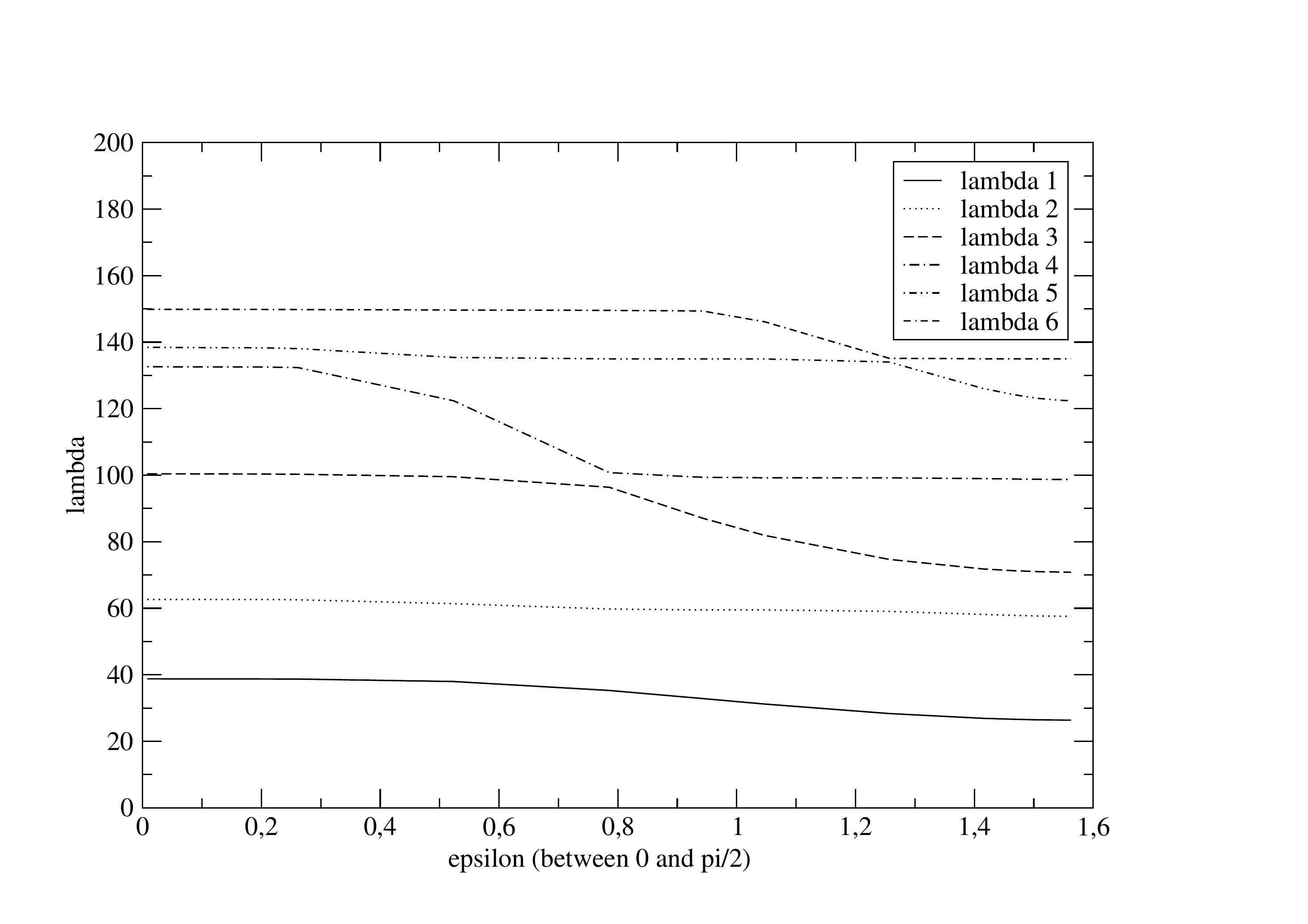}
 \caption{Case Dirichlet-Dirichlet: six first eigenvalues ($R_1=0.4356$)} 
  \end{center}
 \end{figure}

For $\epsilon =\frac \pi 2$, the first three eigenvalues  correspond to $(k,\ell)=(1, 2)$, $(k,\ell)=( 1,4)$, $(k,\ell)=(2,2)$ in the tabular of Subsection \ref{ss7.2} and correspond to the approximate values: $26.41\,;\,57.61\,;\,68.89$.\\
For $\epsilon=0$, we should recover the eigenvalue corresponding to the fourth Dirichlet eigenvalue in $B_{R_1}$ i.e. $139.05$ (which is also an eigenvalue of the $NND$ problem) with a labelling $5$. This suggests that the four first eigenvalues correspond to eigenvalues of the annulus with the right symmetry as confirmed by our computations in Subsection \ref{ss7.4}. They correspond to the pairs for the annulus $(1,2)$, $(1,4)$, $(1,6)$ and $(2,2)$. The sixth one corresponding to the pair $(1,8)$.
   
\subsection{The case of the quarter of a disk (DND) with D-cracks}
  \begin{figure}[!ht]
 \begin{center}
\includegraphics[width=12cm]{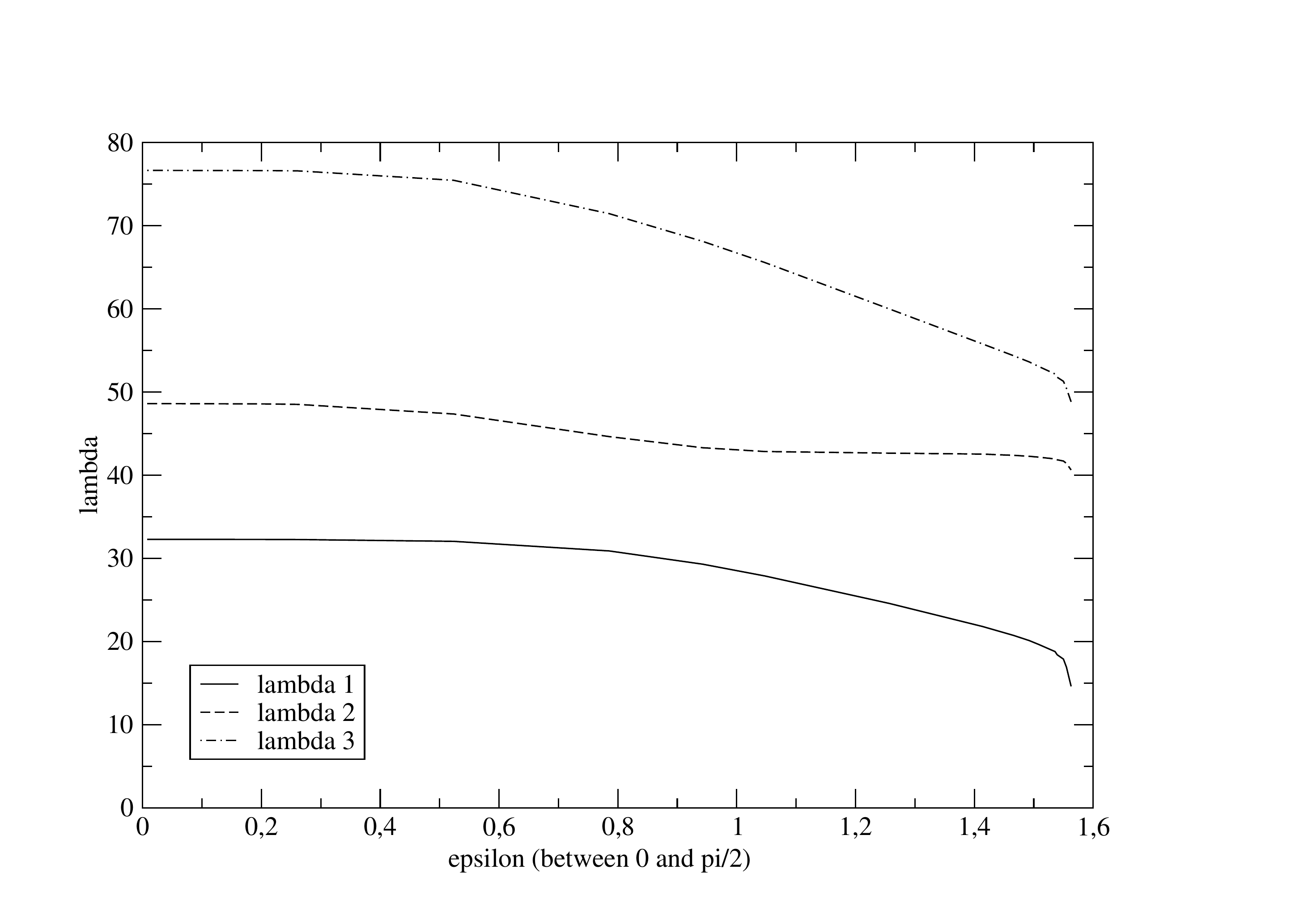}
 \caption{Case  Dirichlet-Neumann: three first eigenvalues ($R_1=0.4356$)} 
  \end{center}
 \end{figure}
 Here we keep the same $R_1, R_2$ and assume Dirichlet for $\theta=0$ and Neumann  for $\theta=\frac  \pi 2$ and we are mainly interested in the ground state energy.
 For $\epsilon=0$, the first eigenvalue is the second eigenvalue $(\ell=1)$ either in $B_{R_1}$ or  in the annulus  $B_{1}\setminus \overline{B_{R_1}}$. In the first case, 
  this would be  
    $$ (j_{0,2} /j_{0,1})^2  \lambda_2^{DND}(\widehat {\mathfrak D}(2,0))= (j_{0,2} /j_{0,1})^2  \lambda_2^{D}(B_1) \sim 77.21, $$
    which appears with labelling $3$.
  
    Hence, we have to look at the first  DND-eigenvalue of the annulus corresponding to $(k,\ell)=(1,1)$, which is approximately $32.53$. Note that the second eigenvalue
     is obtained for $(k,\ell)=(1,3)$, and is approximately $48.78$. 
    
    For $\epsilon=\frac  \pi 2$, we get as ground state 
    $$
    \begin{array}{l}
    \lambda_1^{DND} (\widehat {\mathfrak D}(2,\frac  \pi 2)) =\lambda_2^D(B_1) \sim  14.67\\
      \lambda_2^{DND} (\widehat {\mathfrak D}(2,\frac  \pi 2)) = \lambda_7^D(B_1)\sim 40.70 \\
     \lambda_3^{DND} (\widehat {\mathfrak D}(2,\frac  \pi 2)) = \lambda_9^D(B_1)\sim 49\,.
     \end{array}$$
   We also recover the behavior announced in \eqref{eqDND}.
   For $\epsilon=0$, we recover the pairs $(1,1)$, $(1,3)$ and $(1,5)$ of the annulus.

   \subsection{ The case of the quarter of a disk (NDD) with D-cracks}
 Here we keep the same pair $R_1, R_2$ and assume Neumann for $\theta=0$ and Dirichlet for $\theta=\frac  \pi 2$ and we are mainly interested in the ground state energy.

    \begin{figure}[!ht]
 \begin{center}
\includegraphics[width=12cm]{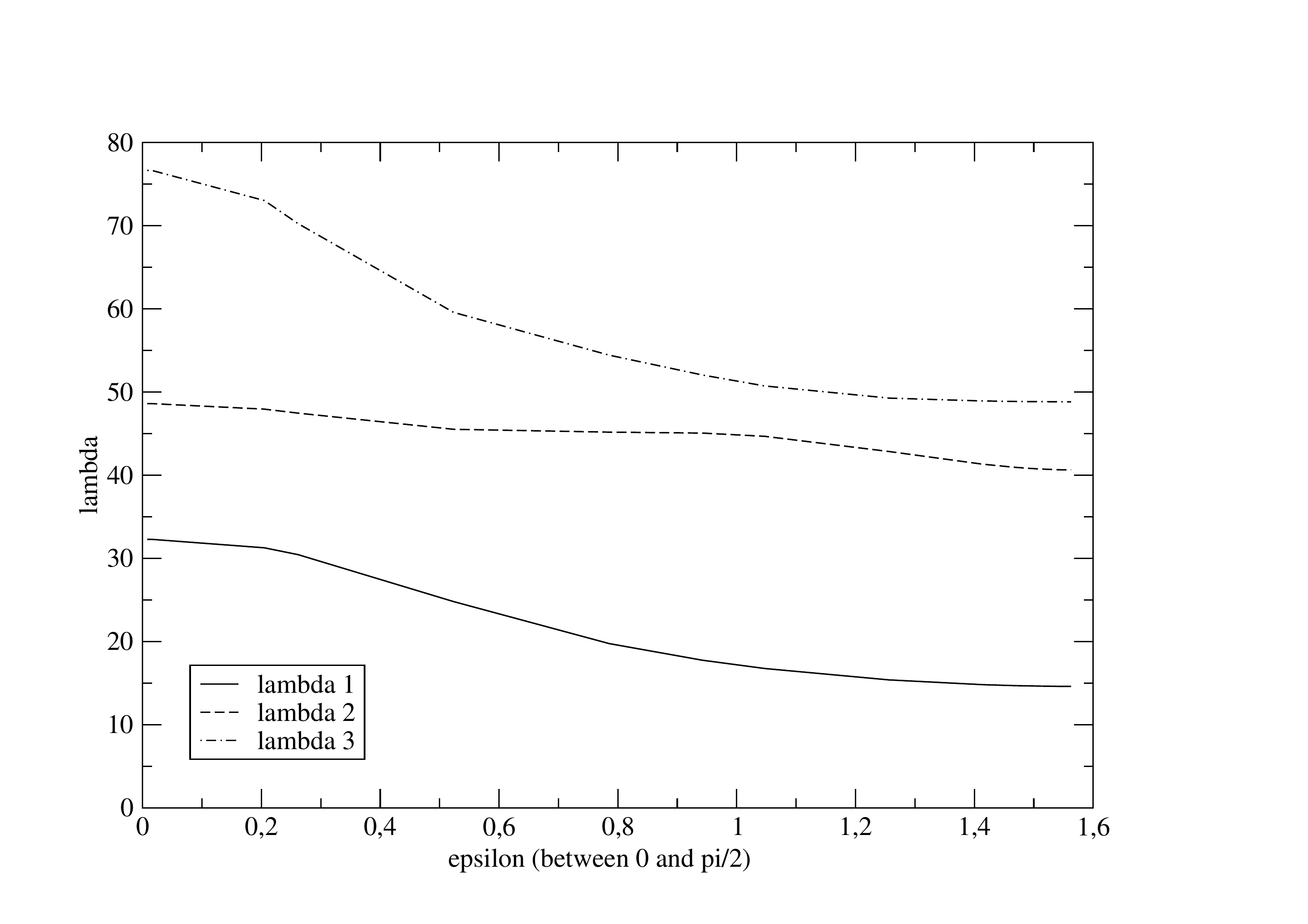}
 \caption{Case  Neumann-Dirichlet: three first eigenvalues ($R_1=0.4356$)} 
  \end{center}
 \end{figure}
For $\epsilon=0$, we recover as for (DND) the pairs $(1,1)$, $(1,3)$ and $(1,5)$ of the annulus.
 
    \subsection{Comparison between (NDD) and (DND) with D-cracks}\label{ss7.8}
 For $\epsilon=0$ and $\frac  \pi 2$ the theory says that the two spectra coincide. We recall from Section \ref{s6},  that the union of these two spectra corresponds to the odd eigenfunctions on $\mathfrak D(2,\epsilon)$ which 
  are  antisymmetric by inversion. \\
 For the ground state energies, the two curves do not cross and  have different curvature properties. This strongly suggests that they are only equal for  $\epsilon=0$ and $\frac  \pi 2$.  
 Some crossing (two points) is observed for the curves corresponding to the second eigenvalues. No crossing is observed for  the curves corresponding to the third  eigenvalues.
  \begin{figure}[!ht]
 \begin{center}
\includegraphics[width=12cm]{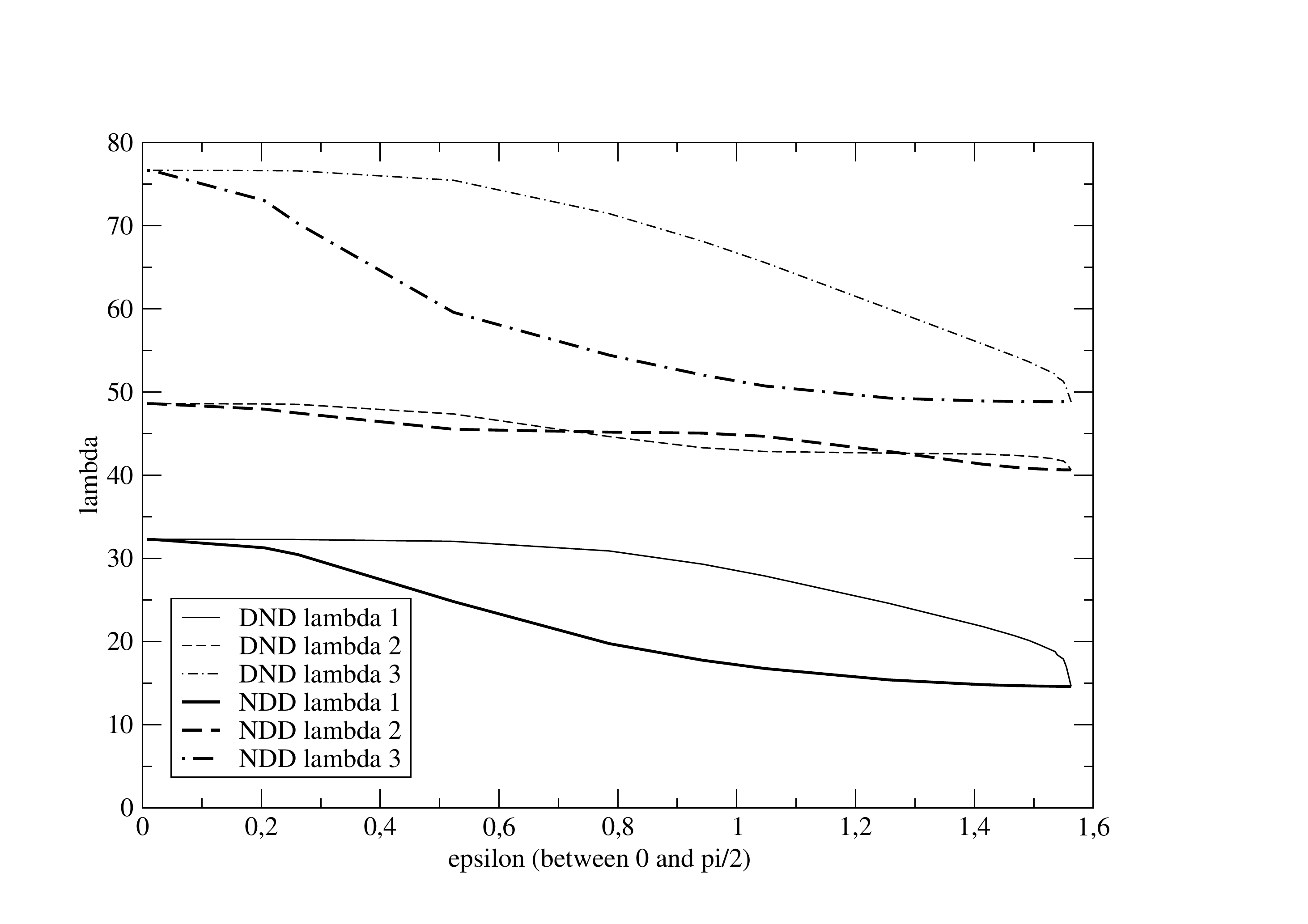}
 \caption{Case Neumann-Dirichlet and case Dirichlet-Neumann: three first eigenvalues ($R_1=0.4356$)} 
  \end{center}
 \end{figure}
 
 \subsection{The complete spectrum in $\mathfrak D(\epsilon)$.}
 Putting the whole spectrum together, we see clearly in Figure 8, as mentioned in the introduction
 of Section \ref{s6}, why there was no hope to get the multiplicity~$3$ in the case $N=2$
  by the successful approach presented for $N>2$.
 We have indeed a first crossing but it only leads to an eigenvalue of multiplicity $2$.  Look at Figure 2 corrresponding to $N=3$ for an interesting comparison.
       
        \begin{figure}[!ht]
 \begin{center}
\includegraphics[width=12cm]{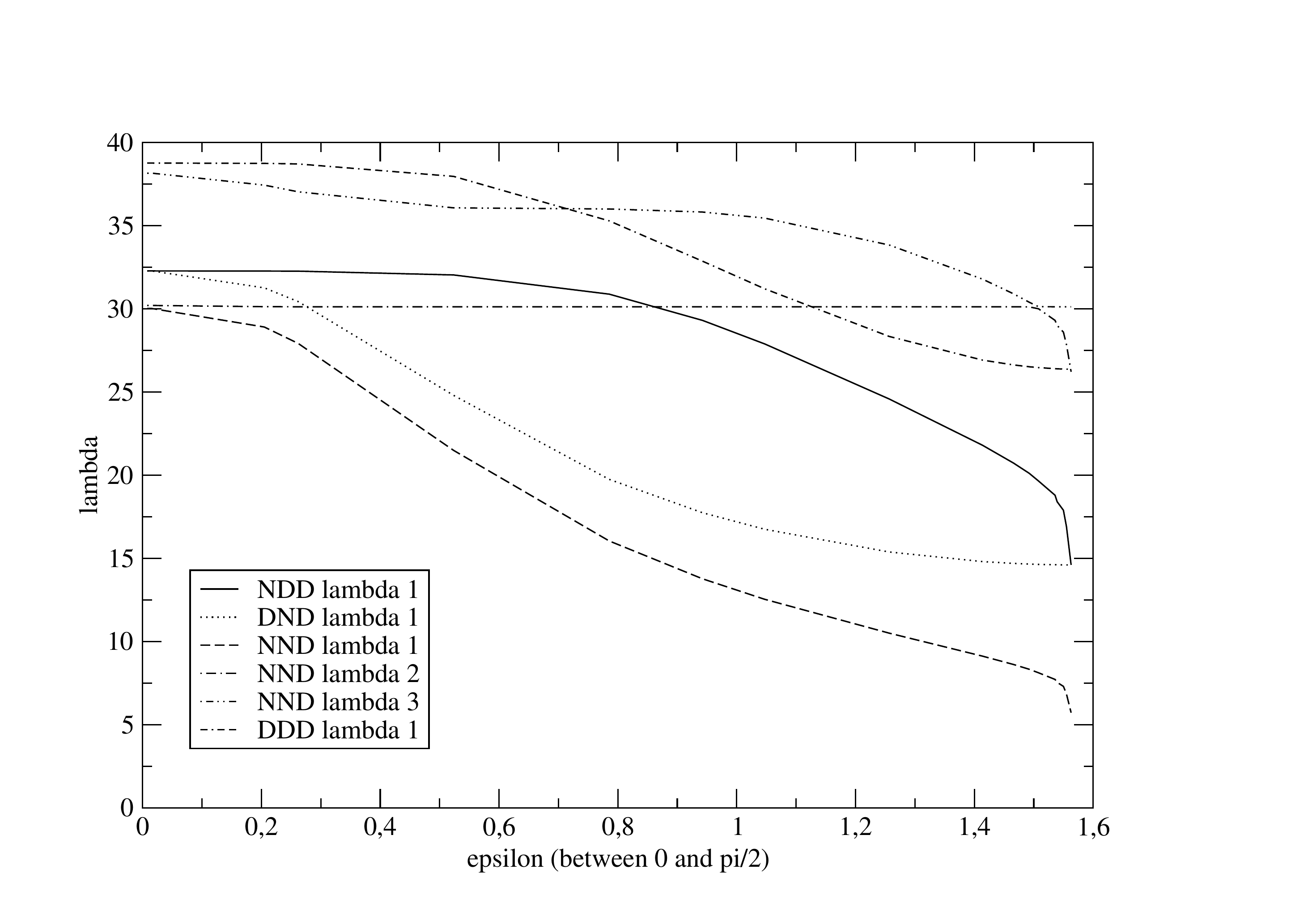}
 \caption{$N=2$, Six first eigenvalues ($R_1=0.4356$)} 
  \end{center}
 \end{figure}
 
 \subsection{On the numerical approach}\label{ss7.9}
 
Here we detail the numerical method used to obtain the different figures.  \\

We look for the numerical computation of the eigenvalue problem~:
\begin{equation}
-\Delta u (x,y)= \lambda u (x,y)
\label{eqnum-1}
\end{equation}
in the case of a domain $\Omega=\{(x,y) \in \mathbb R^2, x=r \cos(\theta), y=r \sin(\theta), r \in ]0, 1[ \mbox{ and } \theta \in ]0,\pi/2[ \}$ (a quarter of a disk). \\

In polar coordinates, Problem \eqref{eqnum-1} becomes~:
\begin{equation}
- \Delta u (r,\theta)= \lambda u (r,\theta)
\label{eqnum-2}
\end{equation}
where $\Delta u (r,\theta)= \frac {\partial^2 u}{\partial r^2} + \frac {1}{r} \frac {\partial u}{\partial r} + \frac {1}{r^2} \frac {\partial ^2 u}{\partial \theta^2}$ (singularity for $r=0$). For the boundary conditions we impose Dirichlet boundary conditions on $r=1$. Moreover, we impose Dirichlet boundary conditions on a line (D-crack) corresponding to $r=R_1$, with $0<R_1<1$, and $\theta \in [\epsilon,\pi/2]$, where $0<\epsilon<\pi/2$.\\

For the numerical discretization of the Laplacian in polar coordinates we use a second order centered finite difference scheme~:
\begin{equation}
\left(\frac {u_{i+1,j} - 2 u_{i,j} + u_{i-1,j}}{\delta r^2}\right) + \frac {1}{r_i} \left(\frac {u_{i+1,j}-u_{i-1,j}}{2 \delta r}\right) +  \frac {1}{r_i^2} \left(\frac {u_{i,j+1} - 2 u_{i,j} + u_{i,j-1}}{\delta \theta^2}\right)
\label{eqnum-3}
\end{equation}
where $u_{i,j}$ is a numerical approximation of $u(r_i,\theta_j)$ on the grid $r_i = i\,\delta r$ and $\theta_j = j\,\delta \theta$ for $i,j=1,\ldots,M-1$, where $\delta r$ and $\delta\theta$ are the steps in each direction $\delta r=\frac{R_2}{M}$ and $\delta \theta= \frac {\pi}{2M}$ ($i=0,M$ and $j=0,M$ correspond to the boundary conditions). After discretization we obtain a non symmetric tridiagonal $M\times M$  matrix $A$. For the simulations we have retained $M=180$.\\

 To compute the eigenvalues of the previous matrix $A$ we use the function DGEEV of the Lapack library. To validate the code we have considered the case of the unit disk with Dirichlet conditions, computing the six first eigenvalues to compare with \eqref{eq:7.3}. This allows us in particular the treatment of the singularity of the coefficients of the operator  appearing at $r=0$. We can also control some limits as $\epsilon \rightarrow 0$ and $\epsilon \rightarrow \frac \pi 2$ where 
 again we have theoretical values or numerical values obtained by different methods. \\

For the numerical tests we have considered $R_1=0.4356$, corresponding to an approximation of the radius of the nodal line 
of the second radial eigenfunction in $B_1$. 

\appendix
\section{Asymptotic additivity of the capacity} 
\label{appCapAdd}

We recall the definition of the condenser capacity of a compact set $K\subset \Omega$, relative to $\Omega$:
\begin{equation}
	\label{eqCapDef1}
	{\rm Cap}_\Omega(K):=\inf\left\{\int_\Omega|\nabla v|^2\,;\,v\in\Gamma_K\right\}.
\end{equation}
Here $\Gamma_K$ is the closed convex subset of $H^1_0(\Omega)$ consisting of the functions $v$ satisfying $v\ge1$ in the following sense: there exists a sequence $(v_n)$ of  functions in $H^1_0(\Omega)$ such that $v_n\ge1$ almost everywhere  in an open neighborhood of $K$ and $v_n\to v$ in $H^1_0(\Omega)$ (see for instance  Definition 3.3.19 in \cite{HP05}).  By the Projection Theorem in the Hilbert space $H_0^1(\Omega)$, there exists a unique $V_K\in\Gamma_K$ realizing the infimum, called the capacitary potential. From the minimization property, it follows immediately that  $V_K$ is harmonic in $\Omega\setminus K$. Furthermore, $V_K$ is non-negative in $\Omega$ (see for instance Item 3 of Theorem 3.3.21 in \cite{HP05}).

Let us now fix an integer $N\ge 1$, $N$ distinct points $x_1,\dots,x_N$ in $\Omega$, and $N$ families of compact subsets of $\Omega$, $(K_i^\varepsilon)_{\varepsilon>0}$ for $i\in\{1,\dots,N\}$. We assume that, for all $i\in\{1,\dots,N\}$, $(K_i^\varepsilon)_{\varepsilon>0}$ concentrates to $x_i$ as $\varepsilon\to 0$, that is to say, for any open neighborhood $U$ of $x_i$, there exists $\varepsilon_U>0$ such that $K_i^\varepsilon\subset U$ for $\varepsilon\le \varepsilon_U$. We use the notation	$K^\varepsilon:=\cup_{i=1}^N K_i^\varepsilon$. We remark that for all $i\in\{1,\dots,N\}$, $K_i^\varepsilon\subset \overline B(x_i,r_i^\varepsilon)$, with $\lim_{\varepsilon\to0}r_i^\varepsilon=0$. By monotonicity of the capacity,
\begin{equation*}
	{\rm Cap}_\Omega(K_i^\varepsilon)\le{\rm Cap}_\Omega(\overline B(x_i,r_i^\varepsilon))\to 0 \mbox{ as } \varepsilon \to 0\,.
\end{equation*}
Furthermore, by subadditivity of the capacity,
\begin{equation}
	\label{eqSub}
	{\rm Cap}_\Omega(K^\varepsilon)\le\sum_{i=1}^N{\rm Cap}_\Omega(K_i^\varepsilon)\to 0 \mbox{ as } \varepsilon \to 0\,.
\end{equation}
Let us now show that in this situation, the capacity is asymptotically additive.

\begin{proposition}
\label{propAdditivity}
	If ${\rm Cap}_\Omega(K^\varepsilon)>0$ for all $\varepsilon>0$, we have, as $\varepsilon\to0$,
	\begin{equation}
		{\rm Cap}_\Omega(K^\varepsilon)\sim\sum_{i=1}^N{\rm Cap}_\Omega(K_i^\varepsilon).
	\end{equation}
\end{proposition}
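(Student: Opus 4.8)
The plan is to establish the matching lower bound to the subadditivity estimate \eqref{eqSub}, since together they give the claimed asymptotic equivalence. For the lower bound I would use the capacitary potentials. Write $V^\varepsilon := V_{K^\varepsilon}$ for the capacitary potential of $K^\varepsilon$ relative to $\Omega$, and $V_i^\varepsilon := V_{K_i^\varepsilon}$ for that of the individual piece $K_i^\varepsilon$. The idea is to test the Dirichlet energy of each $V_i^\varepsilon$ against a suitable competitor built from $V^\varepsilon$, namely a localized version $\chi_i V^\varepsilon$ where $\chi_i$ is a fixed smooth cutoff equal to $1$ near $x_i$ and supported in a small ball $B(x_i,\rho)$ with the balls $B(x_i,\rho)$, $i=1,\dots,N$, pairwise disjoint. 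Since $K_i^\varepsilon$ concentrates to $x_i$, for $\varepsilon$ small enough $K_i^\varepsilon \subset B(x_i,\rho)$ and $\chi_i V^\varepsilon$ lies in $\Gamma_{K_i^\varepsilon}$ (it is $\ge 1$ near $K_i^\varepsilon$ because $\chi_i \equiv 1$ there and $V^\varepsilon \ge 1$ near $K^\varepsilon \supset K_i^\varepsilon$). Hence
\[
{\rm Cap}_\Omega(K_i^\varepsilon) \le \int_\Omega |\nabla(\chi_i V^\varepsilon)|^2 \le \int_{B(x_i,\rho)} |\nabla V^\varepsilon|^2 + C_\rho \int_{B(x_i,\rho)} |\nabla V^\varepsilon|\,|V^\varepsilon| + C_\rho \int_{B(x_i,\rho)} (V^\varepsilon)^2,
\]
and summing over $i$ and using that the balls are disjoint, the first terms sum to at most $\int_\Omega |\nabla V^\varepsilon|^2 = {\rm Cap}_\Omega(K^\varepsilon)$.

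The remaining task is to show that the cross terms and zero-order terms are $o\big({\rm Cap}_\Omega(K^\varepsilon)\big)$. For this I would control $V^\varepsilon$ away from the concentration points. Because $V^\varepsilon$ is harmonic in $\Omega \setminus K^\varepsilon$, nonnegative, bounded by $1$ (by the maximum principle, after checking $V^\varepsilon \le 1$), and because the sets $K^\varepsilon$ shrink to the finite set $\{x_1,\dots,x_N\}$, the potentials $V^\varepsilon$ tend to $0$ locally uniformly on $\Omega \setminus \{x_1,\dots,x_N\}$ — one way is to note $\|V^\varepsilon\|_{H^1_0(\Omega)}^2 = {\rm Cap}_\Omega(K^\varepsilon) \to 0$, so $V^\varepsilon \to 0$ in $H^1_0(\Omega)$, and then upgrade to local uniform convergence on $B(x_i,\rho)\setminus B(x_i,\rho/2)$ via interior elliptic estimates for harmonic functions on that annular region (which is disjoint from $K^\varepsilon$ eventually). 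In particular $\sup_{\mathrm{supp}\,\nabla\chi_i} V^\varepsilon \to 0$. On the annular support of $\nabla \chi_i$ I can then bound $\int |\nabla V^\varepsilon|\,|V^\varepsilon| \le \big(\sup V^\varepsilon\big)^{1/2}\big(\sup V^\varepsilon\big)^{1/2}\cdot(\text{something})$; more carefully, by Cauchy–Schwarz $\int_{B(x_i,\rho)} |\nabla V^\varepsilon|\,|V^\varepsilon| \le \|\nabla V^\varepsilon\|_{L^2(B(x_i,\rho))}\,\|V^\varepsilon\|_{L^2(B(x_i,\rho))}$. The cleanest route is actually to choose $\chi_i$ depending only on the radial variable and integrate the cross term by parts, or simply to observe $\int |\nabla(\chi_i V^\varepsilon)|^2 = \int \chi_i^2|\nabla V^\varepsilon|^2 + \int |\nabla \chi_i|^2 (V^\varepsilon)^2 + \tfrac12\int \nabla(\chi_i^2)\cdot\nabla((V^\varepsilon)^2)$, and the last term equals $-\tfrac12\int \Delta(\chi_i^2)(V^\varepsilon)^2$ since $\chi_i^2$ and $V^\varepsilon$ are smooth there; both correction terms are then $\le C_\rho \|V^\varepsilon\|_{L^2(B(x_i,\rho)\setminus B(x_i,\rho/2))}^2 = o({\rm Cap}_\Omega(K^\varepsilon))$ once we know $\|V^\varepsilon\|_{L^2}^2 / {\rm Cap}_\Omega(K^\varepsilon) \to 0$ on that annulus.

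The main obstacle is precisely this last quotient: showing $\|V^\varepsilon\|_{L^2(B(x_i,\rho)\setminus B(x_i,\rho/2))}^2 = o({\rm Cap}_\Omega(K^\varepsilon))$, i.e. that the $L^2$-mass of $V^\varepsilon$ on a fixed annulus around $x_i$ is negligible compared to the capacity itself (which also tends to $0$). This is where harmonicity is essential: on the fixed annulus $A_i := B(x_i,\rho)\setminus B(x_i,\rho/2)$, which is eventually in $\Omega\setminus K^\varepsilon$, $V^\varepsilon$ is harmonic and nonnegative, so by the Harnack inequality on $A_i$ there is a constant $C_i$ with $\sup_{A_i} V^\varepsilon \le C_i \inf_{A_i} V^\varepsilon$; combining this with a barrier/comparison argument one sees $\sup_{A_i} V^\varepsilon \le C_i' {\rm Cap}_\Omega(K_i^\varepsilon)^{1/?}$ — more robustly, one compares $V^\varepsilon$ on $B(x_i,\rho)$ with the capacitary potential of $K_i^\varepsilon$ relative to $B(x_i,\rho)$ plus a harmonic corrector, obtaining pointwise that on $A_i$, $V^\varepsilon$ is comparable to $V^\varepsilon(y_i)$ for a fixed reference point $y_i\in A_i$, and that in turn is $o(1)$ times $1$ while contributing energy comparably. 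I would make this rigorous by the standard device: let $w_i^\varepsilon$ be the harmonic function in $B(x_i,\rho)$ with boundary value $V^\varepsilon$ on $\partial B(x_i,\rho)$; then $V^\varepsilon \ge w_i^\varepsilon$ there (since $V^\varepsilon$ is superharmonic, being harmonic off $K^\varepsilon$ and $\ge$ its boundary data), $\int_{B(x_i,\rho)}|\nabla w_i^\varepsilon|^2 \le \int_{B(x_i,\rho)}|\nabla V^\varepsilon|^2$, and $V^\varepsilon - w_i^\varepsilon$ is a nonnegative function in $H^1_0(B(x_i,\rho))$ that is $\ge 1$ near $K_i^\varepsilon$ minus $\sup_{\partial B}w_i^\varepsilon$, which after rescaling gives $\int_{B(x_i,\rho)}|\nabla(V^\varepsilon-w_i^\varepsilon)|^2 \ge (1-\sup_{A_i}V^\varepsilon)^2\,{\rm Cap}_{B(x_i,\rho)}(K_i^\varepsilon)$. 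Since $\sup_{A_i}V^\varepsilon\to 0$ and ${\rm Cap}_{B(x_i,\rho)}(K_i^\varepsilon) \ge {\rm Cap}_\Omega(K_i^\varepsilon)$, expanding $\int |\nabla V^\varepsilon|^2 = \int|\nabla w_i^\varepsilon|^2 + \int|\nabla(V^\varepsilon - w_i^\varepsilon)|^2$ over the disjoint balls and summing yields ${\rm Cap}_\Omega(K^\varepsilon) \ge (1-o(1))\sum_i {\rm Cap}_\Omega(K_i^\varepsilon)$, which is the desired lower bound; combined with \eqref{eqSub} this finishes the proof. $\qed$
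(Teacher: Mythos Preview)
Your proposal is correct in the end and arrives at essentially the same argument as the paper, though by a longer route. Both you and the paper localize the capacitary potential $V^\varepsilon=V_{K^\varepsilon}$ to the disjoint balls $B_i=B(x_i,\rho)$ to produce competitors for ${\rm Cap}_\Omega(K_i^\varepsilon)$, relying on the key fact that $\max_{\partial B_i}V^\varepsilon\to 0$ (which you justify via $\|V^\varepsilon\|_{H^1_0}\to 0$ plus interior elliptic estimates; the paper uses the mean value formula in the same way). The only difference is the localization device: you subtract the harmonic extension $w_i^\varepsilon$ of $V^\varepsilon|_{\partial B_i}$ and use the Dirichlet orthogonal decomposition $\int_{B_i}|\nabla V^\varepsilon|^2=\int_{B_i}|\nabla w_i^\varepsilon|^2+\int_{B_i}|\nabla(V^\varepsilon-w_i^\varepsilon)|^2$, whereas the paper takes the more elementary truncation
\[
v_i^\varepsilon:=\frac{1}{1-\delta_\varepsilon}\left(V^\varepsilon-\delta_\varepsilon\right)_+\mathbf 1_{\overline B_i},\qquad \delta_\varepsilon:=\max_i\max_{\partial\overline B_i}V^\varepsilon,
\]
checks $v_i^\varepsilon\in\Gamma_{K_i^\varepsilon}$, and sums to get $\sum_i{\rm Cap}_\Omega(K_i^\varepsilon)\le (1-\delta_\varepsilon)^{-2}{\rm Cap}_\Omega(K^\varepsilon)$ directly. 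The paper's route avoids invoking superharmonicity of $V^\varepsilon$ and the orthogonality argument, and it bypasses entirely the smooth-cutoff attempt with the $o({\rm Cap}_\Omega(K^\varepsilon))$ obstacle you spend the middle of the proposal on; that detour is unnecessary and can be dropped. Your final paragraph is sound and is the proof.
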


\begin{proof} Taking into account Inequality \eqref{eqSub}, we only have to prove
\begin{equation*}
	\limsup_{\varepsilon\to0}\frac{\sum_{i=1}^N{\rm Cap}_\Omega(K_i^\varepsilon)}{{\rm Cap}_\Omega(K^\varepsilon)}\le 1.
\end{equation*}
Let us set $V_\varepsilon:=V_{K^\varepsilon}$. We claim that for any (fixed)  $K$ compact subset of $\Omega\setminus\{x_1,\dots,x_N\}$,  $V_\varepsilon $ converges to $0$ as $\varepsilon$ tends to $0$, uniformly in $K$. Indeed, for $\varepsilon$ small enough, $K\subset\Omega\setminus K^\varepsilon$, 
so that $V_\varepsilon$ is harmonic in an open neighborhood $U$  of $K$. Let us fix $r>0$ such that $\overline B(x,r)\subset U$ for all $x\in K$. From the Mean Value Formula, for all $x\in K$,
\begin{equation*}
	0\le V_\varepsilon(x)=\frac1{\omega_dr^d}\int_{B(x,r)}V_\varepsilon 
	\le 	\frac1{(\omega_dr^d)^\frac12}\|V_\varepsilon\|_{L^2(\Omega)}.
\end{equation*}
The claim follows from the fact that $V_\varepsilon$ tends to $0$ in $L^2(\Omega)$. 

Let us now fix $R>0$ such that the closed balls $\overline B_i:=\overline B(x_i,R)$ are contained in $\Omega$ and mutually disjoint. By the above claim, $\delta_\varepsilon\to0$ when $\varepsilon \to 0$, where
\begin{equation*}
	\delta_\varepsilon:=\max_{i\in\{1,\dots,N\}}\max_{\partial \overline B_i} V_\varepsilon.
\end{equation*}
For $i\in\{1,\dots,N\}$, we define
\begin{equation*}
	v_i^\varepsilon:=\frac1{1-\delta_\varepsilon}\left(V_\varepsilon-\delta_\varepsilon\right)_+\mathbf 1_{\overline B_i}.
\end{equation*}
We have $v_i^\varepsilon\in H^1_0(\Omega)$, and furthermore $v_i^\varepsilon\in \Gamma_{K_i^\varepsilon}$. Indeed, let us pick a sequence $(\varphi_n)$ converging in $H^1_0(\Omega)$ to $V_\varepsilon$ and such that, for all $n$, $\varphi_n\ge 1$ almost everywhere in a neighborhood of $K_i^\varepsilon$. Setting 
\begin{equation*}
	\psi_n:=\frac1{1-\delta_\varepsilon}\left(\varphi_n-\delta_\varepsilon\right)_+\mathbf 1_{\overline B_i},
\end{equation*}  
we get $\psi_n\in H^1_0(\Omega)$ for all $n$ and $(\psi_n)$ converges to $v_i^\varepsilon$ in $H^1_0(\Omega)$. Furthermore, for all $n$, $\varphi_n(x)\ge1$ implies $\psi_n(x)\ge1$, so that $\psi_n\ge1$ almost everywhere in a neighborhood of
$K_i^\varepsilon$. 
It follows that
\begin{equation*}
	{\rm Cap}_\Omega(K_i^\varepsilon)\le \int_\Omega \left|\nabla v_i^\varepsilon \right|^2.
\end{equation*} 
Summing for $i$ ranging from $1$ to $N$, we find
$$
\begin{array}{ll}
	\sum_{i=1}^N{\rm Cap}_\Omega(K_i^\varepsilon)& \le \sum_{i=1}^N\int_\Omega \left|\nabla v_i^\varepsilon \right|^2\\
	& \le \frac1{\left(1-\delta_\varepsilon\right)^2}\int_\Omega\left|\nabla V_\varepsilon\right|^2\\ & =\frac1{\left(1-\delta_\varepsilon\right)^2}{\rm Cap}_\Omega(K^\varepsilon).  \qedhere
\end{array} 
$$
\end{proof}
~\\
{\bf Acknowledgements.}\\
 B.H and T.H-O would like to thank  S. Fournais for many discussions on the subject along the years. B.H and  C.L would like to thank  the Mittag-Leffler institute where part of this work was achieved. C.L  was partially supported by the Swedish Research Council (Grant D0497301). \\

\bibliographystyle{plain}

\end{document}